\newenvironment{thm}[1]
  {\innercustomthm}
  {\endinnercustomthm}
\theoremstyle{plain}
		\newtheorem{theorem}{Theorem}[section]
		\newtheorem{lemma}[theorem]{Lemma}
		\newtheorem{corollary}[theorem]{Corollary}
		\newtheorem{proposition}[theorem]{Proposition}
		\newtheorem{claim}[theorem]{Claim}
		\newtheorem*{ntheorem}{Theorem}
		\newtheorem{fact}[theorem]{Fact}
\theoremstyle{definition}
		\newtheorem{definition}[theorem]{Definition}
\theoremstyle{remark}
		\newtheorem{example}[theorem]{Example}
\newcommand{\A}{{\mathbb{A}}}
\newcommand{\B}{{\mathbb{B}}}
\newcommand{\C}{{\mathbb{C}}}
\newcommand{\G}{{\mathbb{G}}}
\renewcommand{\H}{{\mathbb{H}}}
\newcommand{\Q}{{\mathbb{Q}}}
\newcommand{\R}{{\mathbb{R}}}
\newcommand{\Z}{{\mathbb{Z}}}
\newcommand{\Cb}{{\mathbf{C}}}
\newcommand{\Ocal}{{\mathcal{O}}}
\newcommand{\del}{\partial}
\newcommand{\id}{{\textup{id}}}
\DeclareMathOperator{\Diff}{Diff}
\DeclareMathOperator{\GL}{GL}
\DeclareMathOperator{\Hom}{Hom}
\DeclareMathOperator{\sheafhom}{\mathcal{H}\kern -.5pt \emph{om}}
\DeclareMathOperator{\Rep}{Rep}
\DeclareMathOperator{\SL}{SL}
\DeclareMathOperator{\SU}{SU}
\DeclareMathOperator{\Spec}{Spec}
\DeclareMathOperator{\tr}{tr}
\newcommand{\git}{\mathbin{
  \mathchoice{/\mkern-6mu/}
    {/\mkern-6mu/}
    {/\mkern-5mu/}
    {/\mkern-5mu/}}}
\begin{document}
\title{Arithmetic of curves on moduli of local systems}
\author{Junho Peter Whang}
\address{Dept of Mathematics, Massachusetts Institute of Technology, Cambridge MA}
\email{jwhang@mit.edu}
\date{\today}

\begin{abstract}
We investigate the arithmetic of algebraic curves on coarse moduli spaces for special linear rank two local systems on surfaces with fixed boundary traces. We prove a structure theorem for morphisms from the affine line into the moduli space. We show that the set of integral points on any nondegenerate algebraic curve on the moduli space can be effectively determined.
\end{abstract}

\maketitle

\setcounter{tocdepth}{1}
\tableofcontents

\section{Introduction} \label{sect:1}

\subsection{\unskip} \label{sect:1.1}
This is a continuation of our Diophantine study \cite{whang2} of moduli spaces for local systems on surfaces and their mapping class group dynamics. Let $\Sigma$ be a smooth compact oriented surface of genus $g$ with $n$ boundary curves satisfying $3g+n-3>0$. Let $X_k$ be the coarse moduli space of $\SL_2(\C)$-local systems on $\Sigma$ with prescribed boundary traces $k\in\A^n(\C)$. It is an irreducible complex affine algebraic variety of dimension $6g+2n-6$, and we showed in \cite{whang} that it is log Calabi-Yau if the surface has nonempty boundary. For $k\in\A^n(\Z)$, the variety $X_k$ admits a natural model over $\Z$. The mapping class group $\Gamma$ of the surface acts on $X_k$ via pullback of local systems, and an associated theory of descent on the integral points $X_k(\Z)$ was developed in \cite{whang2}. In this paper, we investigate the interplay between the dynamics of this action and the Diophantine geometry of algebraic curves on $X_k$.

\subsection{Main results} \label{sect:1.2}
We describe the contents of this paper. Relevant background on surfaces and their moduli of local systems is given in Section \ref{sect:2}, where we repeat material from \cite[Section 2]{whang2}. As in \cite{whang2}, let us say that a simple closed curve on $\Sigma$ is \emph{essential} if it cannot be continuously deformed into a point or a boundary curve on $\Sigma$. This paper is devoted to developing consequences of the following boundedness theorem \cite[Theorem 3]{whang2} for nonarchimedean systoles of local systems.

\begin{ntheorem}[\cite{whang2}]
Let $\Ocal$ be a discrete valuation ring with fraction field $F$. Given any representation $\rho:\pi_1\Sigma\to\SL_2(F)$ whose boundary traces all take values in $\Ocal$, there is an essential simple closed curve $a\subset\Sigma$ with $\tr\rho(a)\in\Ocal$.
\end{ntheorem}

In Section \ref{sect:3}, we apply the above theorem to the field of rational functions and prove our first main result, which is a structure theorem for morphisms from the affine line $\A^1$ into the moduli space $X_k$. Following \cite{whang2}, let us say that a possibly reducible algebraic variety $Z$ is \emph{parabolic} if it is covered by nonconstant morphisms $\A^1\to Z$. We also define a subvariety of $X_k$ to be \emph{degenerate} if it is contained in a parabolic subvariety of $X_k$, and \emph{nondegenerate} otherwise. The following theorem gives a modular characterization of the degenerate points of $X_k$.

\begin{theorem}
\label{theorem1}
A point $\rho\in X_k(\C)$ is degenerate if and only if
\begin{enumerate}
	\item \emph{(``parabolic curve'')} there is an essential simple closed curve $a\subset\Sigma$ such that $\tr\rho(a)=\pm2$, or
	\item \emph{(``parabolic pants'')} $(g,n,k)\neq(1,1,2)$ and there is a subsurface $\Sigma'\subset\Sigma$ of genus $0$ with $3$ boundary curves, each of which is an essential curve or a boundary curve of $\Sigma$, such that the restriction $\rho|\Sigma'$ is reducible.
\end{enumerate}
In particular, there is a parabolic proper closed subvariety $Z$ of $X_k$ such that every nonconstant morphism $\A^1\to X_k$ over $\C$ is mapping class group equivalent to one with image in $Z$.
\end{theorem}

Theorem \ref{theorem1} is reminiscent of a result of Sterk \cite{sterk} that the automorphism group of a projective K3 surface acts on the set of its smooth rational curve classes with finitely many orbits. It also has an interesting consequence (Corollary \ref{polycor}) that any polynomial deformation of a Fuchsian representation of surface group preserving the boundary traces must be isotrivial. Finally, Theorem \ref{theorem1} is used in formulating the main Diophantine result of \cite{whang2} for the integral points of $X_k$.

In Section \ref{sect:4}, we study the behavior of integral points on algebraic curves in $X_k$. For each curve $a\subset\Sigma$, let $\tr_a$ be the regular function on $X_k$ given by monodromy trace of local systems along $a$. We define an algebraic curve $C\subset X_k$ to be \emph{integrable} if there is a pants decomposition $P$ of $\Sigma$ (i.e.~a maximal union of pairwise disjoint and nonisotopic essential simple closed curves) such that ${\tr_a}$ is constant on $C$ for every curve $a\subset P$. Otherwise, $C$ is \emph{nonintegrable}. Given an algebraic curve $C\subset X_k$ and an arbitrary subset $A\subseteq\C$, let us denote
$$C(A)=\{\rho\in V(\C):\text{$\tr_a(\rho)\in A$ for every essential simple closed curve $a\subset\Sigma$}\}.$$
We prove the following result, by applying the boundedness of nonarchimedean systoles on local systems to function fields of algebraic curves.

\begin{theorem}
\label{theorem3}
If $C\subset X_k$ is a geometrically irreducible nonintegrable algebraic curve, then $C(A)$ is finite for any closed discrete set $A\subset\C$.
\end{theorem}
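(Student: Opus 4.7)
The plan is to argue by contradiction via function field valuations at points at infinity. Suppose $C(A)$ is infinite. The first observation is that $C(A)$ is discrete in the analytic topology on $C(\C)$: if $\rho_n \to \rho$ in $C(\C)$ with each $\rho_n \in C(A)$, continuity of $\tr_a$ together with closed-discreteness of $A$ forces $\tr_a(\rho_n) = \tr_a(\rho)$ for $n$ large, for each essential curve $a$; since finitely many trace functions generate the coordinate ring of $X_k$ and hence separate its points, one concludes $\rho_n = \rho$ eventually. Passing to the normalization $C^\nu$, the set $C(A)$ lifts to an infinite discrete subset of the smooth Riemann surface $C^\nu(\C)$, which must accumulate at one of the finitely many points $P \in \bar C \setminus C$ of a smooth projective model $\bar C$ of $C$.

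I would next set $F = \C(C)$ and let $v$ be the discrete valuation on $F$ at $P$ with valuation ring $\Ocal$. After possibly replacing $C$ by a finite cover on which characters lift to representations, the inclusion $C \hookrightarrow X_k$ yields a tautological representation $\rho : \pi_1 \Sigma \to \SL_2(F)$ with boundary traces $k_j \in \C \subset \Ocal$. The central step is to produce an essential simple closed curve $a \subset \Sigma$ with $\tr \rho(a) \in \Ocal$ and $\tr_a|_C$ nonconstant on $C$. First apply Theorem~\ref{theorem1} at $v$ to obtain an essential $a_1$ with $\tr \rho(a_1) \in \Ocal$; if $\tr_{a_1}|_C$ is nonconstant, set $a = a_1$. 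Otherwise $\tr \rho(a_1) \in \C$, so cutting $\Sigma$ along $a_1$ produces a (possibly disconnected) subsurface whose boundary traces remain in $\Ocal$ (using $\C \subset \Ocal$), and Theorem~\ref{theorem1} applies to each component of positive complexity, yielding $a_2$ essential, disjoint from $a_1$, with $\tr \rho(a_2) \in \Ocal$. Iterating produces pairwise disjoint essential curves with traces in $\Ocal$; the procedure either finds a nonconstant trace at some step, or else after at most $3g+n-3$ iterations constructs a pants decomposition of $\Sigma$ whose trace functions are all constant on $C$. The latter contradicts the nonintegrability of $C$, so the former occurs and produces the desired $a$.

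To conclude, choose a neighborhood $N$ of $P$ in $\bar C(\C)$ on which $\tr_a|_C$ is bounded in absolute value by some $M$. For any $\rho \in C(A) \cap (N \setminus \{P\})$, the value $\tr_a(\rho)$ lies in the finite set $A \cap \{z \in \C : |z| \leq M\}$, and since $\tr_a|_C$ is nonconstant on the irreducible curve $C$, each fiber $(\tr_a|_C)^{-1}(w) \cap C$ is finite; hence $C(A) \cap (N \setminus \{P\})$ is finite, contradicting the accumulation of $C(A)$ at $P$. The principal obstacle I anticipate is the iterative extraction of $a$ in the middle paragraph: nonintegrability is precisely what prevents the iteration from completing into a pants decomposition of constant traces before a nonconstant trace arises, and the validity of each inductive application of Theorem~\ref{theorem1} relies on the observation that the valuation ring contains the constants $\C$.
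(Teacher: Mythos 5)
Your proposal follows essentially the same route as the paper's proof: pass to the function field of $C$, use the discrete valuation at a point at infinity together with Theorem \ref{theorem1} (in the form of Corollary \ref{corthm1}, which is exactly your ``lift after a finite cover'' remark) to produce an essential curve whose trace is bounded there, use nonintegrability to make that trace nonconstant, and conclude from the discreteness of $A$ and finiteness of fibers of a nonconstant trace. Your iterative cutting argument is in fact a correct and useful elaboration of a step the paper passes over with the phrase ``we may further assume that each $\tr_{a_i}$ is nonconstant'': cutting along constant-trace curves keeps all boundary traces in $\Ocal$ (constants lie in $\Ocal$), Theorem \ref{theorem1} applies to each positive-complexity piece, and termination without a nonconstant trace would yield a constant-trace pants decomposition, contradicting nonintegrability. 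The only point you leave tacit there is the standard surface-topology fact that a non-peripheral essential curve in the cut surface remains essential in $\Sigma$ and non-isotopic to the previously chosen curves.

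The one place where your justification, as written, does not quite go through is the discreteness of $C(A)$ in $C(\C)$. The finitely many generating trace functions available from Fact \ref{fact} are traces along products of generators, hence in general along non-simple loops, and membership in $A$ is only assumed for traces along essential simple closed curves; so you cannot conclude that those generator values are eventually constant along your convergent sequence. The claim itself is true and easily repaired: either invoke the fact (Przytycki--Sikora) that simple closed curve traces generate the character ring, or, more economically, use a single essential curve $a_0$ with $\tr_{a_0}$ nonconstant on $C$ (such exists by nonintegrability): eventual equality $\tr_{a_0}(\rho_n)=\tr_{a_0}(\rho)$ traps the sequence in a finite fiber of $\tr_{a_0}|_C$, forcing it to be eventually constant. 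Alternatively, you can avoid the accumulation-point framing altogether, as the paper does, by carrying out your final estimate at every point at infinity: the points of $C(A)$ lying outside the finitely many fibers $\{\tr_{a_i}=z_i\}$ then remain in a compact subset of $C(\C)$, where a bounded nonconstant trace taking values in the closed discrete set $A$ leaves only finitely many possibilities.
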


Moreover, our method will show that, given an embedding of $C$ into affine space, the sizes of the coordinates of $C(A)$ from the theorem can be effectively determined. One application is the following. For each positive squarefree integer $d$, let $O_d\subset\C$ denote the ring of integers of the imaginary quadratic field $\Q(\sqrt{-d})$. Applying Theorem \ref{theorem3} with $A=\bigcup_{d>0}O_d$, we conclude that a nonintegrable curve in $X_k$ has at most finitely many imaginary quadratic integral points. As a special case, this recovers the finiteness result of Long and Reid \cite{lr} for imaginary quadratic integral points on character curves of one-cusped hyperbolic three-manifolds; see Section \ref{sect:4} for details. Our approach to finiteness of integral points on nonintegrable curves shares its basis with the so-called Runge's method, described in \cite{zannier}.

By combining Theorem \ref{theorem3} with an analysis of integrable algebraic curves using Baker's theory on linear forms in logarithms, we also obtain the following result in Section \ref{sect:4}. Let us define an element in the mapping class group of $\Sigma$ to be a \emph{multitwist} if it is given by a product of commuting Dehn twists (and their powers) along essential curves in a pants decomposition of $\Sigma$. By a $1$-dimensional algebraic torus we shall mean an irreducible algebraic curve of genus $0$ with $2$ punctures.

\begin{theorem}
\label{theorem4}
Let $C\subset X_k$ be a geometrically irreducible nondegenerate algebraic curve over $\Z$. Then $C(\Z)$ can be effectively determined, and
\begin{enumerate}
	\item $C(\Z)$ is finite, or
	\item $C$ is the image of a $1$-dimensional algebraic torus preserved by a nontrivial multitwist, under which $C(\Z)$ consists of finitely many orbits.
\end{enumerate}
If moreover $C$ is not fixed pointwise by any nontrivial multitwist, the same result holds with $C(\Z)$ replaced by the set of all imaginary quadratic integral points on $C$.
\end{theorem}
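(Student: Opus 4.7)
The plan is to argue by a dichotomy on whether $C$ is integrable or nonintegrable. In the nonintegrable case, Theorem~\ref{theorem3} applied with $A=\Z$ immediately gives that $C(\Z)$ is finite, and effective determination is inherited from the proof of Theorem~\ref{theorem3}, which applies Theorem~\ref{theorem1} to the function field of $C$ equipped with valuations from points at infinity to produce explicit bounds on $\tr_a(\rho)$ for $\rho\in C(\Z)$ and hence on the coordinates of $\rho$ in any affine embedding. The same argument, with $A=\bigcup_{d>0}O_d$, handles the imaginary quadratic statement in this subcase.

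Now suppose $C$ is integrable, so $C$ lies in a single fiber $F$ of the pants trace map $(\tr_a)_{a\in P}\colon X_k\to\A^{|P|}$ for some pants decomposition $P$ of $\Sigma$, with the pants traces taking integer values on $C$. Since $C$ does not meet any parabolic subvariety, none of these values equals $\pm 2$, so $F$ has the structure over $\C$ of an algebraic torus of dimension $|P|=3g+n-3$, with the multitwist subgroup generated by $\{T_a:a\in P\}$ acting on $F$ by translations. In multiplicative Fenchel--Nielsen-type coordinates $x_1,\dots,x_{|P|}$ on $F$, each Dehn twist $T_a$ acts as $x_a\mapsto \mu_a x_a$, where $\mu_a$ is the squared eigenvalue of $\rho(a)$; since $\tr\rho(a)\in\Z$ and $\det\rho(a)=1$, each $\mu_a$ is an algebraic unit in a number field of bounded degree.

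The trace functions $\tr_b$ along simple closed curves $b$ not in $P$ are Laurent polynomials in the $x_i$, and their integrality on $\rho\in C(\Z)$ yields a system of $S$-unit equations in the number field generated by the $\mu_a$. If $C$ is not a translate of a positive-dimensional subtorus of $F$, then Baker's theorem on linear forms in logarithms produces an effective bound on the heights of solutions, giving case~(1). Otherwise $C$ is preserved setwise by some nontrivial multitwist $\tau$; then $C(\Z)$ is $\tau$-invariant, and applying the $S$-unit analysis to the quotient curve $C/\tau$ shows that $C(\Z)$ decomposes into finitely many $\tau$-orbits, yielding case~(2). For the imaginary quadratic statement, the hypothesis that $C$ is not fixed pointwise by any nontrivial multitwist prevents a single Galois orbit of an integer point from sweeping out infinitely many imaginary quadratic integer points distributed across varying $O_d$, after which the $S$-unit/Baker analysis carries through over a number field of degree at most two. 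The main technical obstacle I foresee is the rigorous derivation of the $S$-unit equation in the integrable case, together with uniform control of the heights in Baker's theorem across the exceptional pants traces in $\{-1,0,1\}$, where $\mu_a$ is a root of unity and the Dehn twist action factors through a finite cyclic group so that the naive reduction degenerates.
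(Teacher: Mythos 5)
Your nonintegrable half is exactly the paper's: apply Theorem \ref{theorem3} with $A=\Z$ and $A=\bigcup_{d>0}O_d$. The integrable half, however, has a genuine gap at its core. First, a smaller point: you infer the torus structure of the fiber from the pants traces being $\neq\pm2$, but that is only half of perfectness; you also need each restriction to the pairs of pants of $\Sigma|P$ to be irreducible (otherwise the fiber is imperfect, hence parabolic, and there is no finite covering by $\G_m^{3g+n-3}$). The correct deduction is via Theorem \ref{linethm}.A: since $C$ lies in no parabolic subvariety, the fiber containing it must be perfect, and then Proposition \ref{perfprop} gives not a torus structure on the fiber itself but a finite surjection $F:\G_m^{3g+n-3}\to X_{k,t}^P$ equivariant for the multitwist group acting by translations by the eigenvalues $z_i$.

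The more serious issue is the mechanism you propose after that. Your claim that integrality of the trace functions (Laurent polynomials in the multiplicative coordinates) ``yields a system of $S$-unit equations'' is not substantiated --- integrality of values of a Laurent polynomial along a curve is not an $S$-unit equation, and you yourself flag this as the main obstacle. The paper's route is structurally different and avoids this entirely: it first proves (Corollary \ref{perfectcor}) that the integral, resp.\ imaginary quadratic integral, points of a perfect fiber fall into \emph{finitely many $\Gamma_P$-orbits}, using that $|z_i|\neq1$ so every point of the torus is translation-equivalent to one in a compact region; only then does it intersect $C$ with a single orbit $\Gamma_z\cdot p$, which is precisely the lattice-point-on-a-curve-in-a-torus problem settled effectively by Proposition \ref{baker} via Baker's theorem, with the dichotomy ``finite'' versus ``$C$ is a torus translate invariant under a nontrivial element of $\Gamma_z$,'' the latter descending to a nontrivial multitwist preserving $C$ with $C(\Z)$ finitely generated under it. Your proposal is missing this orbit-finiteness step, which is what converts the Diophantine condition into a problem Baker's theory can address. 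Finally, the degenerate pants traces $t_i\in\{0,\pm1\}$, which you correctly identify as breaking the reduction (the twist has finite order), are left unresolved in your sketch; the paper handles them in Proposition \ref{integrable} by projecting to the type $(1,1)$ or $(0,4)$ subsurface glued along $a_i$, where the relevant real fiber is an ellipse so its integral points are finite, and then inducting on the remaining curves; your remarks about the quotient curve $C/\tau$ and about Galois orbits in the imaginary quadratic case would also need to be replaced by this kind of concrete argument.
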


Theorem \ref{theorem4} gives a complete analysis of integral points on every nondegenerately embedded curve $C\subset X_k$ with no intrinsic restrictions, e.g.~regarding the genus and number of punctures of the curve. The structure of Theorem \ref{theorem4} is strongly reminiscent of, and motivated by, classical Diophantine results on subvarieties of log Calabi-Yau varieties of linear type, such as algebraic tori and abelian varieties (cf.~Skolem's approach to the Thue equations \cite{bs}, as well as works of Vojta \cite{vojta2}, \cite{vojta3} and Faltings \cite{faltings}). Finally, for $(g,n)=(1,1)$ or $(0,4)$, the moduli space $X_k$ has an explicit presentation as an affine cubic algebraic surface with equation of the form
\begin{align*}
\tag{$*$}
x^2+y^2+z^2+xyz=ax+by+cz+d
\end{align*}
for some constants $a,b,c,d$ depending on $k$. Affine algebraic surfaces of this type were first studied by Markoff \cite{markoff}, where he introduced a form of nonlinear descent which essentially coincides the mapping class group action. Our work therefore specializes to the following result, which may be proved elementarily (but still using the group action).

\begin{corollary}
\label{corollary5}
On an affine algebraic surface with an equation of the form $(*)$, the integral solutions to any Diophantine equation over $\Z$ can be effectively determined.
\end{corollary}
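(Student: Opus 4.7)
The plan is to reduce Corollary \ref{corollary5} to Theorem \ref{theorem4}, the explicit structure of parabolic curves afforded by Theorem \ref{theorem2} (and its refinement Theorem \ref{linethm}), and the effective description of $X_k(\Z)$ via Markoff descent from \cite{whang2}.

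Let $X\subset\A^3_\Z$ denote the affine cubic surface defined by an equation of the form $(*)$, which is identified with some $X_k$ for $(g,n)\in\{(1,1),(0,4)\}$. Given a polynomial $f\in\Z[x,y,z]$, the goal is to effectively determine $X(\Z)\cap\{f=0\}$. Setting $Z=X\cap V(f)$, I would split into cases on $\dim Z$. If $f$ vanishes identically on $X$, so $Z=X$, then the problem collapses to the effective determination of $X(\Z)$, which was carried out in \cite{whang2}: the integer points form finitely many $\Gamma$-orbits under Markoff descent, each containing a representative of explicitly bounded size, so they can be enumerated. Otherwise $Z\subsetneq X$ has $\dim Z\leq 1$, and its zero-dimensional part contributes finitely many points that can be controlled by resultant computation.

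For each one-dimensional geometrically irreducible component $C$ of $Z$ I would distinguish two subcases. If $C$ is not contained in any parabolic subvariety of $X$, then Theorem \ref{theorem4} applies directly and yields an effective description of $C(\Z)$. If instead $C$ is contained in some parabolic subvariety $V\subsetneq X$, then because $X$ is a surface and $V$ is proper, $C$ must itself be an irreducible component of $V$; the modular characterization from Theorem \ref{linethm} (and the discussion following Theorem \ref{theorem2}) pins down a finite list of such components as rational curves with explicit polynomial parametrizations $\phi:\A^1\to C$, and then $C(\Z)$ is determined by enumerating the rational values $t\in\Q$ for which $\phi(t)\in\Z^3$.

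The hard part of the plan is the parabolic case: Theorem \ref{theorem4} is silent there, and $C(\Z)$ can genuinely be infinite, so effective determination must rely on the explicit classification of parabolic curves on the cubic surface $X$ rather than on a general Diophantine theorem. For surfaces of the form $(*)$ this classification is classical (the parabolic locus consists of the images of essential curves with parabolic monodromy, together with the exceptional locus in the case $(g,n,k)=(1,1,2)$), and once it is invoked, combining its output with the previous cases yields the claimed effective determination of $X(\Z)\cap\{f=0\}$.
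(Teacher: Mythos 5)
Your proposal is correct and follows essentially the same route as the paper: the paper's (very brief) proof likewise reduces to Theorem \ref{theorem4} (or already Theorem \ref{theorem3}) for components not contained in the degenerate locus, together with the explicit description of the fibers $X_{k,t}^P$ of the cubic surface from Section 4.2 --- which for parabolic fibers are (unions of) lines admitting polynomial parametrizations, exactly the case you treat by hand. Your additional remarks on the trivial case $f\equiv 0$ (via the descent result of \cite{whang2}) and on zero-dimensional components are consistent with, and implicit in, the paper's argument.
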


The work of Ghosh and Sarnak \cite{gs} shows that, in the sense of proportions, almost all ``admissible'' Markoff type surfaces $X_k$ for $(g,n)=(1,1)$ have a Zariski dense set of integral points. Thus, Corollary \ref{corollary5} provides an infinite family of nontrivial ambient varieties of dimension two where every Diophantine equation over $\Z$ can be effectively solved.

\subsection{Ackowledgements} \label{sect:1.3}
This work was done as part of the author's Ph.D.~thesis at Princeton University. I thank my advisor Peter Sarnak and Phillip Griffiths for their guidance, encouragement, and insightful discussions. I also thank Rafael von K\"anel for useful historical remarks, and thank the referee for suggesting numerous improvements to the paper.

\section{Background} \label{sect:2}
This section collects relevant background on surfaces and their moduli of local systems, repeating material from \cite[Section 2]{whang2}. We also recall a boundedness result \cite[Theorem 1.3]{whang2} for nonarchimedean systoles of local systems in Section \ref{sect:2.4}, which will prove instrumental in our Diophantine analysis.

\subsection{Surfaces} \label{sect:2.1}
A \emph{surface} is an oriented two dimensional smooth manifold, which we assume to be compact with at most finitely many boundary components unless otherwise indicated. A connected surface is said to have type $(g,n)$ if it has genus $g$ and has $n$ boundary components. A \emph{curve} on a surface is an embedded copy of an unoriented circle, which we shall tacitly assume to be smooth in appropriate contexts. Given a surface $\Sigma$, we shall say that a curve $a\subset\Sigma$ is \emph{nondegenerate} if it does not bound a disk, and \emph{essential} if it is nondegenerate and is disjoint from, and not isotopic to, a boundary curve of $\Sigma$.

A \emph{multicurve} on $\Sigma$ is a finite union of disjoint curves on $\Sigma$. It is said to be \emph{nondegenerate}, resp.~\emph{essential}, if each of its components is. Given a surface $\Sigma$ and an essential multicurve $Q\subset\Sigma$, we denote by $\Sigma|Q$ the surface obtained by cutting $\Sigma$ along the curves in $Q$. A \emph{pants decomposition} of $\Sigma$ is an essential multicurve $P$ such that $\Sigma|P$ is a disjoint union of surfaces of type $(0,3)$. Equivalently, a pants decomposition is a maximal (with respect to inclusion) essential multicurve whose components are pairwise nonisotopic. If $\Sigma$ is a surface of type $(g,n)$ with $3g+n-3>0$, then any pants decomposition of $\Sigma$ consists of $3g+n-3$ essential curves.  An essential curve $a\subset\Sigma$ is \emph{separating} if the two boundary curves of $\Sigma|a$ corresponding to $a$ are on different connected components, and \emph{nonseparating} otherwise.

\subsubsection{Optimal generators} \label{sect:2.1.1}
Let $\Sigma$ be a surface of type $(g,n)$, and choose a base point $x\in\Sigma$. We have the \emph{standard presentation} of the fundamental group
\begin{equation}
\pi_1(\Sigma,x)=\langle \alpha_1,\beta_1',\cdots,\alpha_g,\beta_g',\gamma_1,\cdots,\gamma_n|[\alpha_1,\beta_1']\cdots[\alpha_g,\beta_g']\gamma_1\cdots \gamma_n\rangle
\label{eq:stdprs}
\end{equation}
where in particular $\gamma_1,\cdots,\gamma_n$ correspond to loops around the boundary curves of $\Sigma$. For $i=1,\cdots,g$, let $\beta_i$ be the based loop traversing $\beta_i'$ in the opposite direction. We can choose the sequence of generating loops $(\alpha_1,\beta_1,\cdots,\alpha_g,\beta_g,\gamma_1,\cdots,\gamma_n)$ so that it satisfies the following:
\begin{enumerate}
	\item[(1)]each loop in the sequence is simple,	
	\item[(2)]any two distinct loops in the sequence intersect exactly once (at $x$), and
	\item[(3)]every product of distinct elements in the sequence preserving the cyclic ordering can be represented by a simple loop in $\Sigma$.
\end{enumerate}
Some examples of products alluded to in (3) are $\alpha_1\beta_g$, $\alpha_1\alpha_2\beta_2\beta_g$, and $\beta_g\gamma_n\alpha_1$. We refer to $(\alpha_1,\beta_1,\cdots,\alpha_g,\beta_g,\gamma_1,\cdots,\gamma_n)$ as an \emph{optimal sequence of generators} for $\pi_1\Sigma$. See Figure \ref{fig1} for an illustration of optimal generators for $(g,n)=(2,1)$.

\begin{figure}[ht]
    \centering
    \includegraphics{./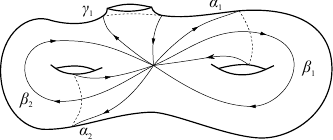}
    \caption{Optimal generators for $(g,n)=(2,1)$}
    \label{fig1}
\end{figure}

\subsubsection{Mapping class group} \label{sect:2.1.2}
Given a surface $\Sigma$, let $\Gamma=\Gamma(\Sigma)=\pi_0\Diff^+(\Sigma,\del\Sigma)$ denote its mapping class group. By definition, it is the group of isotopy classes of orientation preserving diffeomorphisms of $\Sigma$ fixing the boundary of $\Sigma$ pointwise. Given a (simple closed) curve $a\subset\Sigma$ disjoint from $\del\Sigma$, the associated (left) \emph{Dehn twist} $\tau_a\in\Gamma$ on $\Sigma$ is defined as follows. Let $S^1=\{z\in\C:|z|=1\}$ be the unit circle. Let $\tau$
be the diffeomorphism from $S^1\times[0,1]$ to itself given by $(z,t)\mapsto (ze^{2\pi i\xi(t)},t)$ where $\xi(t)$ is a smooth bump function of $t\in[0,1]$ that is $0$ on a neighborhood of $0$ and $1$ on a neighborhood of $1$.
Choose a closed tubular neighborhood $N$ of $a$ in $\Sigma$, and an orientation preserving diffeomorphism $f:N\to S^1\times[0,1]$. The Dehn twist $\tau_a$ is given by
$$\tau_a(x)=\left\{\begin{array}{l l} f^{-1}\circ\tau\circ f(x)&\text{if $x\in N$},\\x &\text{otherwise.}\end{array}\right.$$
The class of $\tau_a$ in $\Gamma$ is independent of the choices involved above, and depends only on the isotopy class of $a$. It is a standard fact that $\Gamma=\Gamma(\Sigma)$ is generated by Dehn twists along simple closed curves in $\Sigma$ (see \cite[Chapter 4]{fm}).

\subsection{Character varieties} \label{sect:2.2}
Throughout this paper, an \emph{algebraic variety} is a scheme of finite type over a field. Given an affine variety $V$ over a given field $k$, we denote by $k[V]$ its coordinate ring over $k$. If moreover $V$ is integral, then $k(V)$ denotes its function field over $k$. Given a commutative ring $A$ with unity, the elements of $A$ will be referred to as \emph{regular functions} on the affine scheme $\Spec A$.

\subsubsection{Character varieties of groups} \label{sect:2.2.1}

Let $\pi$ be a finitely generated group. Its ($\SL_2$) \emph{representation variety} $\Rep(\pi)$ is the affine scheme determined by the functor
$$A\mapsto\Hom(\pi,\SL_2(A))$$
for every commutative ring $A$. Given a sequence of generators of $\pi$ with $m$ elements, we have a presentation of $\Rep(\pi)$ as a closed subscheme of $\SL_2^m$ defined by equations coming from relations among the generators. For each $a\in\pi$, let $\tr_a$ be the regular function on $\Rep(\pi)$ given by $\rho\mapsto\tr\rho(a)$.

The ($\SL_2$) \emph{character variety} of $\pi$ over $\C$ is the affine invariant theoretic quotient
$$X(\pi)=\Rep(\pi)\git\SL_2=\Spec\C[\Rep(\pi)]^{\SL_2(\C)}$$
under the simultaneous conjugation action of $\SL_2$. Note that the regular function $\tr_{a}$ for each $a\in\pi$ descends to a regular function on $X(\pi)$. Moreover, $X(\pi)$ has a natural model over $\Z$, defined as the spectrum of
$$R(\pi)=\Z[\tr_a:a\in\pi]/(\tr_{1}-2,\tr_a\tr_b-\tr_{ab}-\tr_{ab^{-1}}).$$
The relations in the above presentation arise from the fact that the trace of the $2\times 2$ identity matrix is $2$, and $\tr(A)\tr(B)=\tr(AB)+\tr(AB^{-1})$ for every $A,B\in\SL_2(\C)$.

Given an integral domain $A$ with fraction field $F$ of characteristic zero, the $A$-points of $X(\pi)$ parametrize the Jordan equivalence classes of $\SL_2(\overline F)$-representations of $\pi$ having character valued in $A$ (see \cite[Proposition 6.1]{simpson}). Here, following \cite{simpson}, we say that two finite-dimensional linear representations of $\pi$ are Jordan equivalent if they admit composition series with isomorphic graded representations. Since a semisimple finite-dimensional representation of a group over a field of characteristic zero is determined by its character \cite[Chapter XVII, Section 3, Corollary 3.8]{lang}, we see in particular two representations $\rho:\pi\to\SL_2(\C)$ are Jordan equivalent if and only if they have the same character. (It is not true in general that, for a reductive algebraic group $G\leq\GL_r$ over $\C$, the points of $\Hom(\pi,G)\git G$ are determined by their characters.) We refer to \cite{horowitz}, \cite{ps}, \cite{saito} for further details on $\SL_2$-character varieties.

\begin{example}
\label{exfree}
We refer to Goldman \cite{goldman2} for details of examples below. Let $F_m$ denote the free group on $m\geq1$ generators $a_1,\cdots,a_m$.
\begin{enumerate}
	\item[(1)] We have $\tr_{a_1}:X(F_1)\simeq\A^1$.
	\item[(2)] We have $(\tr_{a_1},\tr_{a_2},\tr_{a_1a_2}):X(F_2)\simeq\A^3$ by Fricke \cite[Section 2.2]{goldman2}.
	\item[(3)] The coordinate ring $\Q[X(F_3)]$ is the quotient of the polynomial ring
	$$\Q[\tr_{a_1},\tr_{a_2},\tr_{a_3},\tr_{a_1a_2},\tr_{a_2a_3},\tr_{a_1a_3},\tr_{a_1a_2a_3},\tr_{a_1a_3a_2}]$$
	by the ideal generated by two elements
$$
\tr_{a_1a_2a_3}+\tr_{a_1a_3a_2}-(\tr_{a_1a_2}\tr_{a_3}+\tr_{a_1a_3}\tr_{a_2}+\tr_{a_2a_3}\tr_{a_1}-\tr_{a_1}\tr_{a_2}\tr_{a_3})
$$
and
\begin{align*}
\tr_{a_1a_2a_3}\tr_{a_1a_3a_2}&-\{(\tr_{a_1}^2+\tr_{a_2}^2+\tr_{a_3}^2)+(\tr_{a_1a_2}^2+\tr_{a_2a_3}^2+\tr_{a_1a_3}^2)&\\
&\quad -(\tr_{a_1}\tr_{a_2}\tr_{a_1a_2}+\tr_{a_2}\tr_{a_3}\tr_{a_2a_3}+\tr_{a_1}\tr_{a_3}\tr_{a_1a_3})\\
&\quad +\tr_{a_1a_2}\tr_{a_2a_3}\tr_{a_1a_3}-4\}.
\end{align*}
\end{enumerate}
\end{example}

We record the following, which is attributed by Goldman \cite{goldman2} to Vogt \cite{vogt}.

\begin{lemma}
\label{rellem}
Given a finitely generated group $\pi$ and $a_1,a_2,a_3,a_4\in \pi$, we have
\begin{align*}
2{\tr_{a_1a_2a_3a_4}}&={\tr_{a_1}}{\tr_{a_2}}{\tr_{a_3}}{\tr_{a_4}}+{\tr_{a_1}}{\tr_{a_2a_3a_4}}+{\tr_{a_2}}{\tr_{a_3a_4a_1}}+{\tr_{a_3}}{\tr_{a_4a_1a_2}}\\
&\quad +{\tr_{a_4}}{\tr_{a_1a_2a_3}}+{\tr_{a_1a_2}}{\tr_{a_3a_4}}+{\tr_{a_4a_1}}{\tr_{a_2a_3}}-{\tr_{a_1a_3}}{\tr_{a_2a_4}}\\
&\quad -{\tr_{a_1}}{\tr_{a_2}}{\tr_{a_3a_4}}-{\tr_{a_3}}{\tr_{a_4}}{\tr_{a_1a_2}}-{\tr_{a_4}}{\tr_{a_1}}{\tr_{a_2a_3}}-{\tr_{a_2}}{\tr_{a_3}}{\tr_{a_4a_1}}.
\end{align*}
\end{lemma}

The above computation implies the following fact, which forms a special case of Procesi's theorem \cite{procesi} that ring of invariants of tuples of $N\times N$ matrices under simultaneous conjugation are (finitely) generated by the trace functions of products of matrices.

\begin{fact}
\label{fact}
If $\pi$ is a group generated by $a_1,\cdots,a_m$, then $\Q[X(\pi)]$ is generated as a $\Q$-algebra by the collection $\{\tr_{a_{i_1}\cdots a_{i_k}}:1\leq i_1<\cdots<i_k\leq m\}_{1\leq k\leq 3}$.
\end{fact}

\subsubsection{Moduli of local systems on manifolds} \label{sect:2.2.2}

Given a connected smooth (compact) manifold $M$, the coarse moduli space of local systems on $M$ that we shall study is the character variety $X(M)=X(\pi_1 M)$ of its fundamental group. The complex points of $X(M)$ parametrize the Jordan equivalence classes of $\SL_2(\C)$-local systems on $M$. More generally, given a smooth manifold $M=M_1\sqcup\cdots\sqcup M_m$ with finitely many connected components $M_i$, we define
$$X(M)=X(M_1)\times\cdots\times X(M_m).$$
The construction of the moduli space $X(M)$ is functorial in the manifold $M$. Any smooth map $f:M\to N$ of manifolds induces a morphism $f^*:X(N)\to X(M)$, depending only on the homotopy class of $f$, given by pullback of local systems.

Let $\Sigma$ be a surface. For each curve $a\subset\Sigma$, there is a well-defined regular function $\tr_a:X(\Sigma)\to X(a)\simeq\A^1$, which agrees with $\tr_{\alpha}$ for any $\alpha\in\pi_1\Sigma$ represented by a path freely homotopic to a parametrization of $a$. Implicit here is the observation that $\tr_\alpha$ is independent of the choice of an orientation for $a$ since $\tr(A)=\tr(A^{-1})$ for any matrix $A$ in $\SL_2$. The boundary curves $\del\Sigma$ of $\Sigma$ induce a natural morphism
$$\tr_{\del\Sigma}:X(\Sigma)\to X(\del\Sigma)\simeq\A^n$$
where the latter isomorphism is given by a choice of ordering $\del\Sigma=c_1\sqcup\cdots\sqcup c_n$ of the boundary curves $c_i$. The fibers of $\tr_{\del\Sigma}$
for $k\in\A^n$ will be denoted $X_k=X_k(\Sigma)$. Each $X_k$ is often referred to as a \emph{relative character variety} in the literature. If $\Sigma$ is a surface of type $(g,n)$ satisfying $3g+n-3>0$, the relative character variety $X_k(\Sigma)$ is an irreducible algebraic variety of dimension $6g+2n-6$.

We shall often simplify our notation by combining parentheses where applicable, e.g.~$X_k(\Sigma,\Z)=X_k(\Sigma)(\Z)$. Given a fixed surface $\Sigma$, a subset $K\subseteq X(\del\Sigma,\C)$, and a subset $A\subseteq\C$, we shall denote by
$$X_K(A)=X_K(\Sigma,A)$$
the set of all $\rho\in X(\Sigma,\C)$ such that $\tr_{\del\Sigma}(\rho)\in K$ and $\tr_a(\rho)\in A$ for every essential curve $a\subset\Sigma$. The following lemma shows that there is no risk of ambiguity with this notation.

\begin{lemma}
If $A$ is a subring of $\C$ and $k\in A^n$, then $X_k$ has a model over $A$ and $X_k(A)$ recovers the set of $A$-valued points of $X_k$ in the sense of algebraic geometry.
\end{lemma}

\begin{proof}
Let $A$ and $k\in A^n$ be as above. We have a model of $X_k$ over $A$ with coordinate ring $\Spec R(\pi_1\Sigma)\otimes_\Z A$. It is clear that an $A$-valued point in the sense of algebraic geometry corresponds to a point in $X_k(A)$. The converse follows from the observation, using the identity $\tr_a\tr_b=\tr_{ab}+\tr_{ab^{-1}}$, that $\tr_b$ for every $b\in\pi_1\Sigma$ can be written as a $\Z$-linear combination of products of traces $\tr_a$ for nondegenerate curves $a\subset\Sigma$.
\end{proof}

Similarly, given $k\in X(\del\Sigma,\C)$, a subvariety $V\subseteq X_k(\Sigma)$, and a subset $A\subseteq\C$, we shall denote
$$V(A)=\{\rho\in V(\C):\text{$\tr\rho(a)\in A$ for every essential curve $a\subset\Sigma$}\}.$$

Given an immersion $\Sigma'\to\Sigma$ of surfaces, we have the associated restriction
$$(-)|_{\Sigma'}:X(\Sigma)\to X(\Sigma').$$
The mapping class group $\Gamma(\Sigma)$ acts naturally on $X(\Sigma)$ by pullback of local systems, preserving the integral structure as well as each relative character variety $X_k(\Sigma)$ and the sets $X_K(\Sigma,A)$ defined above. The dynamical aspects of this action on the complex points of $X(\Sigma)$ are not fully understood, but have been studied on certain special subloci. These include the locus of $\SU(2)$-local systems (see \cite{goldman}) on $X$, and the Teichm\"uller locus parametrizing \emph{Fuchsian representations} associated to marked hyperbolic structures on $\Sigma$ with geodesic boundary. This paper is largely concerned with the descent properties of the dynamics on $X(\C)$ beyond the classical setting.

\subsubsection{Reconstruction} \label{sect:2.2.3}
Let $\Sigma$ be a surface of type $(g,n)$ with $3g+n-3>0$, and let $a\subset\Sigma$ be an essential curve. Let $x\in\Sigma$ be a base point lying on $a$, and let $\alpha$ be a simple based loop parametrizing $a$. We shall summarize the reconstruction of a representation $\rho:\pi_1(\Sigma,x)\to\SL_2(\C)$ from representations on connected components of $\Sigma|a$, as well as associated lifts of Dehn twists. Our main reference is Goldman-Xia \cite{gx}. There are two cases to consider, according to whether $a$ is separating or nonseparating.

{\bf Nonseparating curves.}
Suppose that $a$ is nonseparating, so $\Sigma|a$ is connected. Let $a_1$ and $a_2$ be the boundary curves of $\Sigma|a$ corresponding to $a$, and let $(x_i,\alpha_i)$ be the lifts of $(x,\alpha)$ to each $a_i$. We shall assume that we have chosen the numberings so that the interior of $\Sigma|a$ lies to the left as one travels along $\alpha_1$. Let $\beta$ be a simple loop on $\Sigma$ based at $x$, intersecting the curve $a$ once transversely at the base point, such that $\beta$ lifts to a path $\beta'$ in $\Sigma|a$ from $x_2$ to $x_1$. Let us denote by $\alpha_2'$ the loop based at $x_1$ given by the path $\alpha_2'=(\beta')^{-1}\alpha_2\beta'$,
where $(\beta')^{-1}$ refers to the path $\beta'$ traversed in the opposite direction. The immersion $\Sigma|a\to\Sigma$ induces an embedding $\pi_1(\Sigma|a,x_1)\to\pi_1(\Sigma,x)$, giving us the isomorphism
$$\pi_1(\Sigma,x)=(\pi_1(\Sigma|a,x_1)\vee\langle\beta\rangle)/(\alpha_2'=\beta^{-1}\alpha_1\beta).$$
Thus, any representation $\rho:\pi_1(\Sigma,x)\to\SL_2(\C)$ is determined uniquely by a pair $(\rho',B)$, where $\rho':\pi_1(\Sigma|a,x_1)\to\SL_2(\C)$ is a representation and $B\in\SL_2(\C)$ is an element such that $\rho'(\alpha_2')=B^{-1}\rho'(\alpha_1)B$, with the correspondence
$$\rho\mapsto(\rho',B)=(\rho|_{\pi_1(\Sigma|a,x_1)},\rho(\beta)).$$
We define an automorphism $\tau_\alpha$ of $\Hom(\pi_1(\Sigma,x),\SL_2)$ as follows. Given $\rho=(\rho_a,B)$, we set $\tau_\alpha(\rho_a,B)=(\rho_a,B')$ where $B'=\rho(\alpha)B$. This descends to the action $\tau_a$ of the left Dehn twist action along $a$ on the moduli space $X(\Sigma)$.

{\bf Separating curves.}
Suppose that $a$ is separating, so we have $\Sigma|a=\Sigma_1\sqcup\Sigma_2$ with each $\Sigma_i$ of type $(g_i,n_i)$ satisfying $2g_i+n_i-2>0$. Let $a_i$ be the boundary curve of $\Sigma_i$ corresponding to $a$. Let $(x_i,\alpha_i)$ be the lift of $(x,\alpha)$ to $a_i$. We shall assume that we have chosen the numberings so that the interior of $\Sigma_1$ lies to the left as one travels along $\alpha_1$. The immersions $\Sigma_i\hookrightarrow\Sigma$ of the surfaces induce embeddings
$\pi_1(\Sigma_i,x_i)\to\pi_1(\Sigma,x)$
of fundamental groups, and we have an isomorphism
$$\pi_1(\Sigma,x)\simeq(\pi_1(\Sigma_1,x_1)\vee\pi_1(\Sigma_2,x_2))/(\alpha_1=\alpha_2).$$
Thus, any representation
$\rho:\pi_1(\Sigma,x)\to\SL_2(\C)$
is determined uniquely by a pair $(\rho_1,\rho_2)$ of representations $\rho_i:\pi_1(\Sigma_i,x_i)\to\SL_2(\C)$ such that $\rho_1(\alpha_1)=\rho_2(\alpha_2)$, with the correspondence
$$\rho\mapsto(\rho_1,\rho_2)=(\rho|_{\pi_1(\Sigma_1,x_1)},\rho|_{\pi_1(\Sigma_2,x_2)}).$$
We define an automorphism $\tau_{\alpha}$ of $\Hom(\pi_1(\Sigma,x),\SL_2)$ as follows. For a representation $\rho=(\rho_1,\rho_2)$, we set $\tau_\alpha(\rho_1,\rho_2)=(\rho_1,\rho_2')$
where $$\rho_2'(\gamma)=\rho(\alpha)\rho_2(\gamma)\rho(\alpha)^{-1}$$
for every $\gamma\in\pi_1(\Sigma_2,x_2)$. This descends to the action $\tau_a$ of the left Dehn twist along $a$ on the moduli space $X(\Sigma)$.

\subsection{Markoff type cubic surfaces} \label{sect:2.3}
Here, we give a description of the moduli spaces $X_k(\Sigma)$ and their mapping class group dynamics for $(g,n)=(1,1)$ and $(0,4)$. These cases are distinguished by the fact that each $X_k$ is an affine cubic algebraic surface with an explicit equation.

\subsubsection{Case $(g,n)=(1,1)$} \label{sect:2.3.1}
Let $\Sigma$ be a surface of type $(g,n)=(1,1)$, i.e.~a one holed torus. Let $(\alpha,\beta,\gamma)$ be an optimal sequence of generators for $\pi_1\Sigma$. By Example \ref{exfree}.(2), we have an identification
$(\tr_\alpha,\tr_\beta,\tr_{\alpha\beta}):X(\Sigma)\simeq\A^3$.
From the trace relations in Section \ref{sect:2.2}, we obtain
\begin{align*}
\tr_\gamma&=\tr_{\alpha\beta\alpha^{-1}\beta^{-1}}=\tr_{\alpha\beta\alpha^{-1}}\tr_{\beta^{-1}}-\tr_{\alpha\beta\alpha^{-1}\beta}\\
&=\tr_{\beta}^2-\tr_{\alpha\beta}\tr_{\alpha^{-1}\beta}+\tr_{\alpha\alpha}=\tr_{\beta}^2-\tr_{\alpha\beta}(\tr_{\alpha^{-1}}\tr_{\beta}-\tr_{\alpha\beta})+\tr_{\alpha}^2-\tr_{1}\\
&=\tr_{\alpha}^2+\tr_{\beta}^2+\tr_{\alpha\beta}^2-\tr_{\alpha}\tr_{\beta}\tr_{\alpha\beta}-2.
\end{align*}
Writing $(x,y,z)=(\tr_{\alpha},\tr_{\beta},\tr_{\alpha\beta})$ so that each of the variables $x$, $y$, and $z$ corresponds to an essential curve on $\Sigma$ as depicted in Figure \ref{fig2}, the moduli space $X_k\subset X$ has an explicit presentation as an affine cubic algebraic surface in $\A_{x,y,z}^3$ with equation
$$x^2+y^2+z^2-xyz-2=k.$$
\begin{figure}[ht]
    \includegraphics{./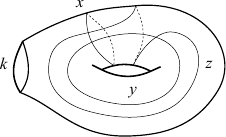}
    \caption{Curves on a surface of type $(1,1)$ with corresponding functions}
    \label{fig2}
\end{figure}

\subsubsection{Case $(g,n)=(0,4)$} \label{sect:2.3.2}
Let $\Sigma$ be a surface of type $(0,4)$, i.e.~a four holed sphere. Let $(\gamma_1,\cdots,\gamma_4)$ be an optimal sequence of generators for $\pi_1\Sigma$. Let us set
$$(x,y,z)=(\tr_{\gamma_1\gamma_2},\tr_{\gamma_2\gamma_3},\tr_{\gamma_1\gamma_3}),$$
so that each of the variables corresponds to an essential curve on $\Sigma$ as depicted in Figure \ref{fig3}. By Example \ref{exfree}.(3), for $k=(k_1,k_2,k_3,k_4)\in\A^4(
\C)$ the relative character variety $X_k=X_k(\Sigma)$ is an affine cubic algebraic surface in $\A_{x,y,z}^3$ given by the equation
$$x^2+y^2+z^2+xyz=Ax+By+Cz+D$$
with
$$\left\{\begin{array}{l}A=k_1k_2+k_3k_4\\B=k_1k_4+k_2k_3\\C=k_1k_3+k_2k_4\end{array}\right.\quad\text{and}\quad D=4-\sum_{i=1}^4k_i^2-\prod_{i=1}^4k_i.$$

\begin{figure}[ht]
    \centering
    \includegraphics{./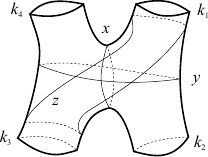}
    \caption{Curves on surfaces of type $(0,4)$ with corresponding functions}
    \label{fig3}
\end{figure}

\subsection{Nonarchimedean systoles} \label{sect:2.4}
In \cite{whang2}, we proved the following result.

\begin{theorem}
\label{nas}
Let $\Ocal$ be a discrete valuation ring with fraction field $F$. Given any representation $\rho:\pi_1\Sigma\to\SL_2(F)$ whose boundary traces all take values in $\Ocal$, there is an essential curve $a\subset\Sigma$ with $\tr\rho(a)\in\Ocal$.
\end{theorem}

\begin{corollary}
\label{corthm1}
Let $\Ocal$ and $F$ be as above. If $F$ has characteristic zero, then for any $\rho\in X_k(F)$ with $k\in\Ocal^n$ there is an essential curve $a\subset\Sigma$ such that $\tr\rho(a)\in\Ocal$.
\end{corollary}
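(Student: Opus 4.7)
The plan is to reduce the statement to Theorem \ref{theorem1} by lifting the point $\rho\in X_k(F)$ to an honest representation into $\SL_2$ of a finite extension of $F$ on which the valuation extends to a discrete valuation. The characteristic zero hypothesis enters precisely in this valuation-extension step.

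First I would pass from a point of the character variety to an actual representation. A point of $X_k(F)$ encodes, via Fact \ref{fact} and the description of $X(\pi)$ as $\Spec R(\pi)$, the trace values $\tr_a(\rho)\in F$ of a conjugacy class of semisimple representations. By the Fricke--Vogt reconstruction theorem for $\SL_2$-characters (equivalently, by the general fact that a closed point of a GIT quotient has a preimage in the representation variety over some algebraic extension), there exist a finite extension $F'/F$ and a representation $\tilde\rho:\pi_1\Sigma\to\SL_2(F')$ whose class in $X(\pi_1\Sigma)(F')$ maps to $\rho$; in particular $\tr\tilde\rho(a)=\tr_a(\rho)\in F$ for every $a\in\pi_1\Sigma$, and along the boundary curves these values lie in $\Ocal\subseteq F$.

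Next, because $\char F=0$, the extension $F'/F$ is separable, and the integral closure of $\Ocal$ in $F'$ is a semilocal Dedekind domain with finitely many primes above the maximal ideal of $\Ocal$. Localizing at any such prime yields a discrete valuation ring $\Ocal'\subset F'$ with $\Ocal'\cap F=\Ocal$. The boundary traces $\tr\tilde\rho(c_i)=k_i$ then lie in $\Ocal\subseteq\Ocal'$, so Theorem \ref{theorem1} applies to $\tilde\rho$ and $\Ocal'$ and produces an essential curve $a\subset\Sigma$ with $\tr\tilde\rho(a)\in\Ocal'$. Since this trace equals $\tr_a(\rho)$, which lies in $F$, we conclude $\tr_a(\rho)\in F\cap\Ocal'=\Ocal$, as desired.

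The only substantive issue is the lifting in the first step, which is classical for $\SL_2$ via Fricke--Vogt (or, more generally, a consequence of the geometry of the GIT quotient): it is where one would invoke a cited reference rather than argue from scratch. The valuation-extension step is routine given characteristic zero; in positive characteristic one would have to worry about inseparable extensions, which is why the hypothesis appears in the statement.
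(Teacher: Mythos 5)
Your proposal is correct and follows essentially the same route as the paper: lift $\rho$ to a genuine representation over a finite extension $F'/F$, extend the discrete valuation to $F'$, apply Theorem \ref{theorem1} there, and intersect back down via $\Ocal'\cap F=\Ocal$. The only minor discrepancy is where the characteristic zero hypothesis is used --- the paper invokes it to guarantee the lift to a representation over a finite extension exists (the valuation-extension step needs no separability, as any extension of a discrete valuation to a finite extension is automatically discrete) --- but this does not affect the validity of your argument.
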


\begin{proof}
Given $\rho\in X_k(F)$, since $F$ has characteristic zero there exists a finite field extension $F'/F$ such that $\rho$ is the class of a representation $\rho':\pi_1\Sigma\to\SL_2(F')$. Choosing an extension of the valuation on $F$ to $F'$, let $\Ocal'\subset F'$ be the associated valuation ring. It follows from Theorem \ref{theorem1} and our hypothesis $k\in\Ocal^n\subseteq(\Ocal')^n$ that there is an essential curve $a\subset\Sigma$ with $\tr\rho'(a)\in\Ocal'$. This then implies that $\tr\rho(a)=\tr\rho'(a)\in\Ocal'\cap F=\Ocal$, which is the desired result.
\end{proof}

Below, we record a special case of Corollary \ref{corthm1}, which plays a crucial role when we analyze the structure of morphisms from the affine line to $X_k$ in Section \ref{sect:3}.

\begin{lemma}
\label{linelem}
Given any morphism $f:\A^1\to X_k$ over $\C$, there is an essential curve $a\subset\Sigma$ such that $\tr_a\circ f:\A^1\to\A^1$ is constant.
\end{lemma}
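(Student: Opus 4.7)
The plan is to apply Corollary \ref{corthm1} to the function field $F = \C(t)$ of the affine line, equipped with the discrete valuation at infinity.

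First, I would translate the geometric hypothesis into a statement over a function field. A morphism $f : \A^1 \to X_k$ over $\C$ corresponds to a $\C[t]$-point of $X_k$, and composing with the inclusion $\C[t] \hookrightarrow F = \C(t)$ yields a point $\rho \in X_k(F)$. Equip $F$ with the discrete valuation $v_\infty$ at infinity, namely $v_\infty(g/h) = \deg h - \deg g$ for polynomials $g,h \in \C[t]$; the associated valuation ring $\Ocal \subset F$ consists of rational functions regular at $\infty$, and in particular contains all constants $\C$.

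Next, since the boundary traces of $\rho$ are the constants $k \in \A^n(\C) \subset \Ocal^n$, and since $F$ has characteristic zero, Corollary \ref{corthm1} provides an essential curve $a \subset \Sigma$ with $\tr_a(\rho) \in \Ocal$. However $\tr_a(\rho)$ is nothing other than the composition $\tr_a \circ f$, viewed as an element of $\C[t] \subset F$. A polynomial in $\C[t]$ lies in $\Ocal$ precisely when it has no pole at infinity, i.e.\ precisely when it is constant. This yields the desired essential curve.

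The substantive content is entirely packaged inside Theorem \ref{theorem1} and Corollary \ref{corthm1}; once those are available, the only genuine choice here is to use the valuation at infinity on $\C(t)$ (as opposed to a local valuation at some finite point of $\A^1$), because only that valuation detects polynomials as being constants. There is no real obstacle beyond setting up the function-field interpretation correctly.
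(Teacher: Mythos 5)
Your proof is correct and follows essentially the same route as the paper: interpret $f$ as a $\C(t)$-point of $X_k$, apply Corollary \ref{corthm1} with the valuation at infinity (the boundary traces being constants, hence in $\Ocal$), and conclude that the polynomial $\tr_a\circ f$ has no pole at $\infty$ and so is constant. Nothing is missing.
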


\begin{proof}
A morphism $f:\A^1\to X_k$ corresponds to a $\C[t]$-valued point of $X_k$, giving rise to a $\C(t)$-valued point $\rho_f\in X_k(\C(t))$. Applying Corollary \ref{corthm1} with $F=\C(t)$, with discrete valuation given by the order of vanishing at $\infty$, we deduce that there is an essential curve $a\subset\Sigma$ such that $\tr\rho_f(a)={\tr_a}\circ f:\A^1\to\A^1$
has no pole at $\infty$, which implies that it must be constant, as desired.
\end{proof}

\section{Parabolic subvarieties} \label{sect:3}
Let $\Sigma$ be a surface of type $(g,n)$ satisfying $3g+n-3>0$, and let $X_k=X_k(\Sigma)$ for $k\in X(\del\Sigma,\C)$ be a relative character variety of $\Sigma$. Given a pants decomposition $P$ of $\Sigma$, the immersion $P\to\Sigma$ induces a morphism
$${\tr_P}:X_k\to X(P)\simeq\A^{3g+n-3}$$
whose fibers for $t\in X(P,\C)$ will be denoted $X_{k,t}^P=\tr_P^{-1}(t)$. Let
$$(-)|_{\Sigma|P}:X_k\to X(\Sigma|P)=X(\Sigma_1)\times\cdots\times X(\Sigma_{2g+n-2})$$
be the morphism induced by the immersion $\Sigma|P\to\Sigma$, where the product on the right hand side is taken over the connected components $\Sigma_i$ of $\Sigma|P$. Since $\pi_1\Sigma$ is a free group of rank $2$ and a point of $X(\Sigma_i)\simeq\A^3$ (see Example \ref{exfree}.(2)) is determined by the value of its traces along the boundary curves of $\Sigma_i$, it follows that $(-)|_{\Sigma|P}$ is constant along each fiber $X_{k,t}^P$. We make the following definition.

\begin{definition}
\label{perfdef}
Let $(P,t)$ be as above. The fiber $X_{k,t}^P$ is \emph{perfect} if
\begin{enumerate}
	\item[\textup{(1)}] $\tr_a(X_{k,t}^P)\neq\pm2$ for every curve $a\subseteq P$, and
	\item[\textup{(2)}] each factor of $(X_{k,t}^P)|_{\Sigma|P}$ is irreducible, or $(g,n,k)=(1,1,2)$.
\end{enumerate}
\end{definition}

The first part of condition (2) in the definition above means that, for each connected component $\Sigma_i$ of $\Sigma|P$, the point $(X_{k,t}^P)|_{\Sigma_i}$ is represented by an irreducible local system on $\Sigma_i$, or an irreducible representation $\pi_1\Sigma_i\to\SL_2(\C)$.

The above definition is motivated by the following theorem, which is the main result of this section. Recall from Section \ref{sect:1} that an algebraic variety $Z$ over $\C$ is said to be \emph{parabolic} if every closed point of $Z$ lies in the image of some nonconstant morphism $\A^1\to Z$. In the following, a pair $(P,t)$ will denote a pants decomposition $P$ of $\Sigma$ and an element $t\in X(P,\C)$.

\begin{theorem}
\label{linethm}
We have the following.
\begin{enumerate}
	\item[\textup{A.}] For each $(P,t)$, the fiber $X_{k,t}^P$ is either perfect or parabolic.
	\item[\textup{B.}] For any nonconstant morphism $f:\A^1\to X_k$, there is a parabolic fiber $X_{k,t}^P$ for some $(P,t)$ containing the image of $f$.
\end{enumerate}
\end{theorem}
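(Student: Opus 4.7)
The plan is to deduce (B) from (A) together with an iterated application of Lemma \ref{linelem}, the substantive content residing in (A). For (B), given a nonconstant $f:\A_\C^1\to X_k$, Lemma \ref{linelem} produces an essential curve $a_1\subset\Sigma$ along which $\tr_{a_1}\circ f$ is constant, so $f$ factors through a fiber of $\tr_{a_1}$. Post-composing $f$ with the restriction morphism $X_k\to\prod_j X(\Sigma_j)$ arising from the components $\Sigma_j$ of $\Sigma|a_1$ and projecting to any factor of type other than $(0,3)$, we re-apply Lemma \ref{linelem} to obtain another essential curve along which $\tr\circ f$ is constant. Iterating until every component is a pair of pants produces a pants decomposition $P=\{a_1,\ldots,a_{3g+n-3}\}$ and a point $t\in X(P,\C)$ with $f(\A_\C^1)\subseteq X_{k,t}^P$. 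By (A) this fiber is perfect or parabolic; the structural description obtained in (A) will show that perfect fibers are finite quotients of $\prod_{a\subset P}\G_m$ and therefore admit no nonconstant image of $\A^1$, so the nonconstancy of $f$ forces $X_{k,t}^P$ to be parabolic.

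For (A), we suppose $X_{k,t}^P$ is not perfect and construct, through any chosen $\rho\in X_{k,t}^P(\C)$, a nonconstant morphism $\A^1\to X_{k,t}^P$. We use the reconstruction formalism of Section 2.3 to present such a $\rho$ by representations $\rho_j$ on the pants components $\Sigma_j\subseteq\Sigma|P$ together with gluing data $B_a\in\SL_2$ for each nonseparating $a\subset P$ (and a matching condition for each separating $a\subset P$). There are two deformation mechanisms, matching the two clauses of the perfect condition. First, if $\tr_a(X_{k,t}^P)=\pm 2$ for some $a\subset P$, then $\rho(a)$ is parabolic or $\pm I$. Writing $\rho(a)=\pm(I+N)$ with $N^2=0$, the one-parameter subgroup $(I+sN)_{s\in\A^1}$ centralizes $\rho(a)$, and replacing $B_a$ by $(I+sN)B_a$ (or, in the separating case, conjugating $\rho_2$ by $(I+sN)$) defines a continuous analogue of the Dehn twist action which fixes every $\rho_j$ and every pants trace, hence remains in $X_{k,t}^P$. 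Generic irreducibility of the pieces of $\rho$ reduces the nonconstancy of the resulting family in the GIT quotient to the observation that the unipotent $(I+sN)$ does not centralize $\rho'$ for $s\neq 0$; in the corner case $\rho(a)=\pm I$ the gluing data is entirely unconstrained, yielding the required $\A^1$ even more directly.

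The second mechanism is activated when some $\rho_j$ is reducible and $(g,n,k)\neq(1,1,2)$. Since $\pi_1\Sigma_j$ is free of rank two and $X(\pi_1\Sigma_j)\cong\A^3$ via Fricke coordinates, the reducible representations with prescribed boundary traces form a positive-dimensional family parametrized by a nilpotent shear along a common invariant line of the generators, provided the boundary traces do not force this reducibility trivially. Transferring this family to $X_{k,t}^P$ while holding all other $\rho_i$ and all gluing data fixed produces the required morphism $\A^1\to X_{k,t}^P$ through $\rho$. The case $(g,n,k)=(1,1,2)$ is genuinely exceptional because then the single pants $\Sigma_j$ has boundary traces $(2,t_a,t_a)$ on which the Fricke discriminant $t_1^2+t_2^2+t_3^2-t_1t_2t_3-4$ vanishes identically in $t_a$, so the reducibility of $\rho_j$ is structurally forced and the corresponding deformation is already subsumed in the gluing torsor, producing no new direction in $X_{k,t}^P$. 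The main obstacle will be verifying nonconstancy of each deformation in the GIT quotient, which reduces to a careful analysis of centralizers of the fixed pieces of $\rho$; this is most delicate in overlap cases where $\tr_a=\pm 2$ coincides with reducibility of some $\rho_j$, and in verifying the converse assertion that perfect fibers really are torsor-like, which is what legitimizes the reduction of (B) to (A).
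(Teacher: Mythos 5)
Your architecture is close in spirit to the paper's (deform points of imperfect fibers to prove they are parabolic; use Lemma \ref{linelem} to confine affine lines to fibers), but two steps have genuine gaps. First, your reduction of (B): after iterating Lemma \ref{linelem} you only know the image of $f$ lies in \emph{some} fiber $X_{k,t}^P$, and you dispose of the perfect case by asserting that perfect fibers are finite quotients of $\G_m^{3g+n-3}$ and ``therefore admit no nonconstant image of $\A^1$.'' That inference is invalid: a surjective morphism with finite fibers from a torus does not preclude parabolic images (consider $\G_m\to\A^1$, $z\mapsto z+z^{-1}$), and finite quotients of tori can contain copies of $\A^1$. Moreover, the torsor-like description of perfect fibers is not something your proof of (A) produces; in the paper it only appears later (Proposition \ref{perfprop}), and even there $F$ is merely surjective with finite fibers. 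The paper sidesteps this entirely by proving a stronger claim: every nonconstant $\A^1\to X_k$ lies in an \emph{imperfect} fiber. The crux of that induction is precisely the case your iteration glosses over, namely when $\tr_a\circ f$ is constant and $f|_{\Sigma|a}$ is constant while $f$ is not (the nonconstancy hidden in the gluing data); there the paper uses Fact \ref{fact} to find a nonconstant short trace word, locates a subsurface of type $(1,1)$ or $(0,4)$ on which $f$ is nonconstant, and invokes the explicitly computed base cases to exhibit an imperfect fiber. Your iteration, which completes to an arbitrary pants decomposition whenever restrictions are constant, gives no control over perfection of the resulting fiber, so (B) does not follow as written.

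Second, in your proof of (A), the reducible-pants mechanism cannot be run ``while holding all other $\rho_i$ and all gluing data fixed.'' Deforming the reducible piece with fixed boundary traces changes the actual boundary monodromy matrices (in the paper's normalization $\rho_T^u(\gamma_2)$ has upper-right entry $T$), so the matching conditions of Section 2.3 needed to reassemble a representation of $\pi_1\Sigma$ fail unless the adjacent pieces or gluing elements are conjugated compatibly as $T$ varies. The bulk of the paper's argument is exactly this propagation by explicit unipotent conjugations, with separate treatment of the self-glued case (image of the pants of type $(1,1)$, where constraints on $\rho(\beta)$ must first be derived and nonconstancy is checked via $\tr\rho_T(\beta)=T/(\lambda^{-1}-\lambda)$), of the separating, nonseparating and boundary cases for each successive curve, and of globally reducible $\rho$, for which the natural family is constant in the quotient and a different deformation must be constructed. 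You flag nonconstancy in the GIT quotient as ``the main obstacle'' but leave it unresolved, and the reducible case requires a new construction rather than a verification. (Your first mechanism, bending along a curve of $P$ with trace $\pm2$, is essentially the content the paper imports from \cite[Lemma 31]{whang2}, so that part is acceptable modulo the same caveat.)
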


The remainder of this section is organized as follows. In Section \ref{sect:3.1}, we give a proof of Theorem \ref{linethm} in the cases $(g,n)=(1,1)$ and $(0,4)$. The moduli space $X_k$ in these cases is an explicitly defined algebraic surface, making the proof easier. We prove the general case of Theorem \ref{linethm} in Section \ref{sect:3.2}. In Section \ref{sect:3.3}, we describe the consequences of Theorem \ref{linethm}, including Theorem \ref{theorem1} as well as a rigidity result (Corollary \ref{polycor}) for certain polynomial deformations of Fuchsian representations of surface groups.

\subsection{Base cases} \label{sect:3.1}
In this subsection, we give a proof of Theorem \ref{linethm} in the cases $(g,n)=(1,1)$ and $(0,4)$. We shall refer the reader to Section \ref{sect:2.3} for explicit presentations of $X_k$ in these cases. First, it is useful to record an elementary lemma.

\begin{lemma}
\label{redlem}
If $\Sigma$ is a surface of type $(0,3)$, then $k=(k_1,k_2,k_3)\in X(\Sigma)\simeq\A^3$ is reducible if and only if $k_1^2+k_2^2+k_3^2-k_1k_2k_3-2=2$.
\end{lemma}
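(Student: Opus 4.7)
The plan is to reduce the lemma to the classical Fricke trace identity together with the standard fact that a pair $(A,B)\in\SL_2(\C)^2$ is reducible if and only if $\tr[A,B]=2$.

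First I would identify $\pi_1\Sigma$ with the free group $F_2=\langle\alpha,\beta\rangle$, where the three boundary curves of the pair of pants $\Sigma$ are represented by $\alpha$, $\beta$, and $(\alpha\beta)^{-1}$. Since traces in $\SL_2$ are invariant under inversion, the identification $X(\Sigma)\simeq\A^3$ sends a character $[\rho]$ to $(k_1,k_2,k_3)=(\tr\rho(\alpha),\tr\rho(\beta),\tr\rho(\alpha\beta))$, and reducibility of $k$ is the same as reducibility of the semisimplification of $\rho$.

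Next I would invoke the Fricke identity
$$\tr[A,B]=(\tr A)^2+(\tr B)^2+(\tr AB)^2-(\tr A)(\tr B)(\tr AB)-2,$$
valid for all $A,B\in\SL_2(\C)$ and derivable by repeated use of the basic relation $\tr(XY)+\tr(XY^{-1})=\tr(X)\tr(Y)$. Applied at $A=\rho(\alpha)$, $B=\rho(\beta)$, the equation $k_1^2+k_2^2+k_3^2-k_1k_2k_3-2=2$ in the statement is precisely $\tr\rho([\alpha,\beta])=2$.

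It then suffices to show the reducibility criterion: $(A,B)\in\SL_2(\C)^2$ is reducible iff $\tr[A,B]=2$. The forward direction is immediate, since after simultaneous upper-triangularization the commutator $[A,B]$ has $1$'s on the diagonal. For the converse, the condition forces $[A,B]$ to have characteristic polynomial $(x-1)^2$, hence to be unipotent, and I would conclude by a short case analysis on $A$. If $A=\pm I$, reducibility is automatic. If $A$ has distinct eigenvalues, conjugate to $A=\diag(\lambda,\lambda^{-1})$ with $\lambda\neq\pm1$ and, writing $B=\begin{pmatrix}a&b\\c&d\end{pmatrix}$, a direct computation gives $\tr[A,B]=2-bc(\lambda-\lambda^{-1})^2$, so the vanishing forces $bc=0$, meaning $B$ fixes an eigenline of $A$. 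If $A$ is conjugate to $\pm\begin{pmatrix}1&1\\0&1\end{pmatrix}$, an analogous calculation yields $\tr[A,B]-2=c^2$, forcing $B$ to fix the unique $A$-invariant line. The main obstacle is really just organizing this nonsemisimple case analysis; no ingredient beyond $2\times 2$ matrix algebra and the Fricke identity should be required.
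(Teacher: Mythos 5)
Your proof is correct. Note that the paper itself does not argue this lemma at all: it simply declares it elementary and cites \cite[Lemma 3]{whang3}, so your self-contained argument is doing the work the paper outsources. Your route is the standard one and it is complete: identifying $\pi_1\Sigma$ with $\langle\alpha,\beta\rangle$ and the boundary traces with $(\tr_\alpha,\tr_\beta,\tr_{\alpha\beta})$ (using invariance of trace under inversion), the Fricke identity converts the displayed equation into $\tr\rho([\alpha,\beta])=2$, and the only genuinely delicate point is the converse of the reducibility criterion, which you settle by the correct trichotomy on the conjugacy type of $A$: for $A=\pm I$ reducibility is automatic, for $A\sim\diag(\lambda,\lambda^{-1})$ with $\lambda\neq\pm1$ the computation $\tr[A,B]=2-bc(\lambda-\lambda^{-1})^2$ forces $bc=0$, and for $A\sim\pm\begin{pmatrix}1&1\\0&1\end{pmatrix}$ the computation $\tr[A,B]-2=c^2$ forces $c=0$; both formulas check out. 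One small point worth making explicit is the passage from pairs to points of the GIT quotient: since for $\SL_2$ the character determines the semisimplification, the point $k$ is reducible precisely when some (equivalently every) representation with character $k$ is reducible, so applying the pairwise criterion to any representative is legitimate; you gesture at this with the phrase about semisimplification, and spelling it out in one sentence would make the argument airtight.
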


\begin{proof}
This follows by combining the observation that two matrices $A,B\in\SL_2(\C)$ share an eigenvector if and only if $\tr(ABA^{-1}B^{-1})=2$ with the expression for this trace in terms of $\tr(A)$, $\tr(B)$, and $\tr(AB)$, derived for instance in Section \ref{sect:2.3.1}.
\end{proof}

\subsubsection{Surfaces of type $(1,1)$} \label{sect:3.1.1}
Suppose $\Sigma$ is of type $(1,1)$, and let $(\alpha,\beta,\gamma)$ be optimal generators of $\pi_1\Sigma$. The moduli space $X_k$ can be presented as an affine cubic algebraic surface, with equation
$$x^2+y^2+z^2-xyz-2=k$$
where the variables $(x,y,z)$ correspond to the monodromy traces $(\tr_{\alpha},\tr_{\beta},\tr_{\alpha\beta})$ and $\tr_{\gamma}\equiv k$. Now, let $a\subset\Sigma$ be the essential curve underlying $\alpha$. Since any essential curve of $\Sigma$ is, up to isotopy, mapping class group equivalent to $a$, it suffices to prove Theorem \ref{linethm}.A for $(g,n)=(1,1)$ where $P=a$. Let $t\in X(a,\C)\simeq\C$. (We are following the notation of Section \ref{sect:2.2.2}; in effect, this means we are fixing the trace $t$ of monodromy along $a$.) Since $X_{k,t}^P=X_{k,t}^a$ is a conic section
$$y^2-tyz+z^2+t^2-2-k=0$$
in the $(y,z)$-plane, elementary geometry shows that $X_{k,t}^a$ is parabolic if and only if
(1) $t=\pm2$, or (2) the conic section is degenerate. Let us assume $t\neq\pm2$; the latter condition states that the equation for $X_{k,t}^a$ factors as
$$y^2-tyz+z^2+t^2-2-k=(y-\lambda z+m_1)(y-\lambda^{-1} z+m_2)=0$$
for some $\lambda\in\C^*$ and $m_i\in\C$. Expanding and comparing coefficients, we must have
\begin{itemize}
	\item $\lambda+\lambda^{-1}=t$,
	\item we have
	$$\begin{bmatrix}1 & 1\\ -\lambda & -\lambda^{-1}\end{bmatrix}\begin{bmatrix}m_1\\ m_2\end{bmatrix}=\begin{bmatrix}0 \\0\end{bmatrix}$$
	and hence $m_1=m_2=0$, and therefore
	\item $m_1m_2=t^2-2-k=0$, and in particular $k\neq2$.
\end{itemize}
Thus, we see that $X_{k,t}^{a}$ is parabolic if and only if
\begin{enumerate}
	\item[\textup{(1)}] $t=\pm2$, or
	\item[\textup{(2)}] $t\neq\pm2$, $(g,n,k)\neq(1,1,2)$, and $(X_{k,t}^a)|_{\Sigma|a}=(t,t,t^2-2)$.
\end{enumerate}
Here, we made an identification $X(\Sigma|a)\simeq\A^3$. By Lemma \ref{redlem}, the last condition in (2) is equivalent to saying that $(X_{k,t}^a)|_{\Sigma|a}$ is reducible. This concludes the proof of Theorem \ref{linethm}.A for $(g,n)=(1,1)$. Combining this with Lemma \ref{linelem}, we immediately obtain Theorem \ref{linethm}.B in this case.

\subsubsection{Surfaces of type $(0,4)$} \label{sect:3.1.2}
Suppose $\Sigma$ is of type $(0,4)$, and let $(\gamma_1,\gamma_2,\gamma_3,\gamma_4)$ be optimal generators of $\pi_1\Sigma$. The moduli space $X_k$ is an affine cubic algebraic surface with equation
$$x^2+y^2+z^2+xyz-Ax-By-Cz-D=0$$
where the variables $(x,y,z)$ correspond to the traces $(\tr_{\gamma_1\gamma_2},\tr_{\gamma_2\gamma_3},\tr_{\gamma_1\gamma_3})$ and
$$\left\{\begin{array}{l}A=k_1k_2+k_3k_4\\ B=k_2k_3+k_1k_4\\ C=k_1k_3+k_2k_4\end{array}\right.\quad\text{and}\quad D=4-k_1^2-k_2^2-k_3^2-k_4^2-k_1k_2k_3k_4$$
with $k_i\equiv\tr\gamma_i$. Now, let $a,b,c\subset\Sigma$ be essential curves lying in the free homotopy classes of $\gamma_1\gamma_2$, $\gamma_2\gamma_3$, and $\gamma_1\gamma_3$, respectively. Since any essential curve of $\Sigma$ is, up to isotopy, mapping class group equivalent to one of the curves $a$, $b$, and $c$, it suffices to prove Theorem \ref{linethm}.A for $(g,n)=(0,4)$ where $P$ consists of one of the curves $a$, $b$, and $c$. We treat the case $P=a$ in what follows; the remaining cases will proceed similarly. Let $t\in X(a,\C)\simeq\C$. Again by elementary geometry, we see that $X_{k,t}^a$ is parabolic if and only if (1) $t=\pm2$, or (2) $X_{k,t}^a$ is a degenerate conic in the $(y,z)$-plane. Let us assume $t\neq\pm2$; the latter condition states that the equation for $X_{k,t}^a$ factors as
$$t^2+y^2+z^2+tyz-At-By-Cz-D=(y+\lambda z+m_1)(y+\lambda^{-1}z+m_2)=0$$
for some $\lambda\in\C^*$ and $m_i\in\C$. Expanding and comparing coefficients, we see that
\begin{itemize}
	\item $\lambda+\lambda^{-1}=t$,
	\item we have 
	$$\begin{bmatrix}1 & 1\\\lambda & \lambda^{-1}\end{bmatrix}\begin{bmatrix}m_1\\ m_2\end{bmatrix}=\begin{bmatrix}-B\\ -C\end{bmatrix},$$
	and
	\item $m_1m_2=-At-D$.
\end{itemize}
This is equivalent to
$$-\frac{B^2-tBC+C^2}{t^2-4}=\frac{1}{(\lambda^{-1}-\lambda)^2}(\lambda^{-1}B-C)(-\lambda B+C)=-At-D$$
or in other words $(t^2-4)(-Ax-D)+(B^2-tBC+C^2)=0$. Upon rearranging, this is seen to be equivalent to
$$(k_1^2+k_2^2+x^2-tk_1k_2-4)(k_3^2+k_4^2+x^2-tk_3k_4-4)=0,$$
which in turn is equivalent to saying that at least one factor of $(X_{k,t}^a)|_{\Sigma|a}$ is reducible, by Lemma \ref{redlem}. This proves Theorem \ref{linethm}.A for $(g,n)=(0,4)$, and Theorem \ref{linethm}.B follows by Lemma \ref{linelem}.

\subsection{General case} \label{sect:3.2}
Let $\Sigma$ be a surface of type $(g,n)$ with $3g+n-3>0$, and let $X_k=X_k(\Sigma)$ be a relative character variety of $\Sigma$. We shall first prove the following claim, by induction on $(g,n)$.	Claim \ref{firstclaim} proves Theorem \ref{linethm}.B, conditional on Theorem \ref{linethm}.A.

\begin{claim}
\label{firstclaim}
For any nonconstant morphism $f:\A^1\to X_k$, there exists an imperfect fiber $X_{k,t}^P$ for some $(P,t)$ containing the image of $f$.
\end{claim}

\begin{proof}
We have already proved Theorem \ref{linethm} for $(g,n)=(1,1)$ and $(0,4)$, and these will provide the base cases for our induction. Let $f:\A^1\to X_k$ be a nonconstant morphism. By Lemma \ref{linelem}, there is an essential curve $a\subset\Sigma$ such that $\tr_{a}\circ f\equiv t_0$ is constant. Suppose first that the restriction
$$f|_{\Sigma'}:\A^1\to X_k(\Sigma)\xrightarrow{(-)|_{\Sigma'}}X(\Sigma')$$
of $f$ to some connected component $\Sigma'$ of $\Sigma|a$ is nonconstant. We observe that the image of $f|_{\Sigma'}$ lies in $X_{k'}(\Sigma')$ for some $k'\in X(\del\Sigma',\C)$ determined by $k$ and $t_0$. Hence, by inductive hypothesis, there is a pants decomposition $P'$ of $\Sigma'$ and an element $t'\in X(P',\C)$ such that $X_{k',t'}^{P'}(\Sigma')$ is an imperfect fiber containing the image of $f|_{\Sigma'}$. If $a$ is nonseparating, then taking
$$(P,t)=(P'\sqcup a, (t', t_0))$$
we see that $X_{k,t}^P$ is an imperfect fiber containing the image of $f$, as desired. If $f$ is separating and $\Sigma|a=\Sigma'\sqcup\Sigma''$, then again $f|_{\Sigma''}$ has image lying in some $X_{k''}(\Sigma'')$ for some $k''\in X(\del\Sigma'',\C)$ determined by $k$ and $t_0$, and by a repeated application of Lemma \ref{linelem} we see that there is a pants decomposition $P''$ of $\Sigma''$ and $t''\in X(P'',\C)$ such that $X_{k'',t''}^{P''}$ contains the image of $f|_{\Sigma''}$. Taking
$$(P,t)=(P'\sqcup a\sqcup P'',(t',t_0,t'')),$$
we again see that $X_{k,t}^P$ is an imperfect fiber containing the image of $f$.

To complete our proof of the claim, it remains only to consider the case where $f|_{\Sigma|a}$ is constant.

We first consider the case where $a$ is nonseparating. Let $\alpha_1,\cdots,\alpha_{2g+n}$ be a sequence of optimal generators for $\pi_1\Sigma$. Up to mapping class group action, we may assume that $a$ is the essential curve underlying $\alpha_1$. By Fact \ref{fact}, the coordinate ring of the fiber $X_k\to X(\Sigma|a)$ above $f|_{\Sigma|a}$ is generated by functions of the form
$$\tr_{\alpha_2},\tr_{\alpha_2\alpha_{i}},\tr_{\alpha_2\alpha_i\alpha_j},\tr_{\alpha_1\alpha_2},\tr_{\alpha_1\alpha_2\alpha_{i}}$$
for $3\leq i<j\leq 2g+n$. Thus, since $f$ is nonconstant, the composition of $f$ with at least one of the above coordinate functions must be nonconstant. Let us consider the case where $\tr_{\alpha_2}\circ f$ is nonconstant; the other cases will follow similarly. There is a surface $\Sigma'\subset\Sigma$ of type $(1,1)$ containing the loops $\alpha_1$ and $\alpha_2$ in its interior. By our hypothesis, we see that $f|_{\Sigma'}$ is nonconstant, and the image of $f|_{\Sigma'}$ lies in $X_{k'}(\Sigma)$ for some $k'\in X(\del\Sigma',\C)$ determined by the (constant) value of $f|_{\Sigma|a}$; indeed, the boundary of $\Sigma'$ lies in $\Sigma|a$. Thus, by the case of Theorem \ref{linethm} for $(g,n)=(1,1)$ proved above, there is an essential curve $a'\subset\Sigma'$ and $t'\in X(a',\C)$ such that $X_{k',t'}^{a'}(\Sigma')$ is an imperfect fiber containing the image of $f|_{\Sigma'}$. Completing $a'\sqcup\del\Sigma'\subset\Sigma$ to a pants decomposition $P$ of $\Sigma$, we see that there is some $t\in X(P,\C)$ such that $X_{k,t}^P$ is an imperfect fiber containing the image of $f$, as desired.

The case where $a$ is separating is very similar, by appropriately invoking the case of Theorem \ref{linethm} for $(g,n)=(0,4)$.
\end{proof}

Suppose that $P=a_1\sqcup\cdots\sqcup a_{3g+n-3}$ is a pants decomposition of $\Sigma$, and suppose that $t=(t_1,\cdots,t_{3g+n-3})\in X(P,\C)\simeq\C^{3g+n-3}$ is chosen so that $X_{k,t}^P$ is a perfect fiber. Let us consider the morphism
$$X_{k,t}^P(\Sigma)\to\prod_{i=1}^{3g+n-3}X_{k_i,t_i}^{a_i}(\Sigma_i)$$
where each $\Sigma_i$ is the surface of type $(0,4)$ or $(1,1)$ obtained by gluing together the two boundary curves on $\Sigma|P$ corresponding to $a_i$, and the boundary traces $k_i$ are appropriately determined from $k$, $P$, $t$, and $\Sigma_i$. As a consequence of Proposition \ref{perfprop} proved in Section \ref{sect:4.3}, the above morphism is finite at the level of complex points (cf.~proof of Corollary \ref{perfectcor}). Combining this with the results from Section \ref{sect:3.1}, we deduce that there cannot be a nonconstant morphism from $\A^1$ to such a perfect fiber, proving one half of Theorem \ref{linethm}.A.

We shall henceforth assume that $(g,n)\neq(1,1),(0,4)$, to simplify our remaining argument. To complete the proof of Theorem \ref{linethm}, it suffices to prove the following.

\begin{claim}
\label{secondclaim}
Assume that $(g,n)\neq(1,1),(0,4)$. Given a semisimple representation $\rho:\pi_1\Sigma\to\SL_2(\C)$ whose image in the character variety $X(\Sigma)$ lies in an imperfect fiber $X_{k,t}^P$, there is a one-parameter polynomial family
$$\rho_T:\pi_1\Sigma\to\SL_2(\C)$$
of representations with nonconstant images all lying in $X_{k,t}^P$, so that we have $\rho=\rho_{T_0}$ for some $T_0\in\C$.
\end{claim}

\begin{proof}
Let $\rho$ and $X_{k,t}^P$ be as above. We shall argue by division into several cases. For the benefit of the reader, we list the cases broadly considered and their hypotheses.
\begin{enumerate}
	\item \textbf{Parabolic curve.} There is a curve $a\subset P$ with $\tr_{a}(X_{k,t}^P)=\pm2$.
	\begin{itemize}
		\item \textsc{Case 1:} the curve $a$ is separating.
		\item \textsc{Case 2:} the curve $a$ is nonseparating.
	\end{itemize}
	\item \textbf{Parabolic pants.} There is no curve $a\subset P$ with trace $\pm2$, but there is a component $\Sigma'$ of $\Sigma|P$ such that $X_{k,t}^P|\Sigma'$ is reducible.
	\begin{itemize}
		\item \textsc{Case 1:} the image of $\Sigma'$ in $\Sigma$ is a surface of type $(1,1)$.
		\item \textsc{Case 2:} the image of $\Sigma'$ in $\Sigma$ is a surface of type $(0,3)$.
	\end{itemize}
\end{enumerate}
We now begin our proof.
\\

\noindent\textbf{Parabolic curve.} Let us first consider the case where there is a curve $a\subseteq P$ with $\tr_a(X_{k,t}^P)=\pm2$. We may assume to have fixed the base point of $\Sigma$ to lie on $a$. Let $\alpha$ be a smooth simple loop parametrizing $a$. Up to global conjugation, we may assume that
\begin{align}
\tag{$*$}\rho(\alpha)=s\begin{bmatrix} 1 & u\\0 & 1\end{bmatrix}
\end{align}
for $s\in\{\pm1\}$ and $u\in\{0,1\}$. There are several elementary cases to consider.

\textsc{Case 1:} the curve $a$ is separating. Let us write $\Sigma|a=\Sigma_1\sqcup\Sigma_2$. Up to conjugation, we may assume that on top of $(*)$ the following conditions hold:
\begin{enumerate}
	\item[\textup{(1)}] $\rho|\Sigma_1$ is irreducible or upper triangular,
	\item[\textup{(2)}] $\rho|\Sigma_2$ is irreducible or lower triangular, and
	\item[\textup{(3)}] if $\rho|\Sigma_1$ and $\rho|\Sigma_2$ are both reducible, then they are either both non-diagonal or both diagonal.
\end{enumerate}
Indeed, if one of $\rho|\Sigma_1$ and $\rho|\Sigma_2$ is irreducible, then up to relabeling $\Sigma_1$ and $\Sigma_2$ we may assume $\rho|\Sigma_2$ is irreducible, and $\rho|\Sigma_1$ must be irreducible or upper triangular up to conjugation. So suppose both $\rho|\Sigma_1$ and $\rho|\Sigma_2$ are reducible. This implies that $u=0$ in ($*$) above, since otherwise $\rho$ must be upper triangular and non-diagonal, contradicting the hypothesis that $\rho$ is semisimple. Unless $\rho$ is reducible (whence diagonal), there is a basis $v_1,v_2$ of $\C^2$ such that each $v_i$ is a common eigenvector for $\rho|\Sigma_i$. Up to conjugation $M^{-1}\rho M$ of $\rho$ by the invertible matrix $M=[v_1,v_2]$, we may thus assume $\rho|\Sigma_1$ is upper triangular and $\rho|\Sigma_2$ is lower triangular. For convenience, we shall denote $\rho_i=\rho|\Sigma_i$ so that we may write $\rho=(\rho_1,\rho_2)$ using the notation of the second part of Section \ref{sect:2.2.3}.

\textsc{Subcase 1a:} $\rho_2$ is non-diagonal. Let us consider the family of representations $\rho^T=(\rho_1,u_T\rho_2 u_T^{-1})$ for $T\in\C$ where
	$$u_T=\begin{bmatrix}1 & T\\ 0 & 1\end{bmatrix}.$$
Note that $\rho_1(\alpha)=u_T\rho_2(\alpha)u_T^{-1}$ so the representation $\rho^T$ is well-defined.
For any $\beta\in\pi_1\Sigma_1$ and $\gamma\in\pi_1\Sigma_2$ with
$$\rho(\beta)=\begin{bmatrix} b_1 & b_2 \\ b_3 & b_4\end{bmatrix}\quad\text{and}\quad \rho(\gamma)=\begin{bmatrix} c_1 & c_2 \\ c_3 & c_4\end{bmatrix}
$$
we have
\begin{align*}
\tr\rho^T(\beta\gamma)&=\tr\left(\begin{bmatrix} b_1 & b_2 \\ b_3 & b_4\end{bmatrix}\begin{bmatrix}1 & T\\ 0 & 1\end{bmatrix}\begin{bmatrix} c_1 & c_2 \\ c_3 & c_4\end{bmatrix}\begin{bmatrix}1 & -T\\ 0 & 1\end{bmatrix}\right)\\
&=b_1c_2+b_2c_3+b_3c_2+b_4c_4+(b_1c_3-b_3c_1-b_4c_3+b_3c_4)T-b_3c_3T^2.
\end{align*}
Since $\rho_2$ is non-diagonal while irreducible or lower triangular, we may choose $\gamma$ as above so that $c_3\neq0$. We have the following possibilities.
\begin{itemize}
	\item (a) Suppose $\rho_1$ is irreducible. We can choose $\beta$ as above with $b_3\neq0$, so that $\tr_{\beta\gamma}(\rho^T)$ is a nonconstant function of $T$.
	\item (b) Suppose $\rho_1$ is upper triangular (so $b_3=0$ for any choice of $\beta$), and there exists $\beta\in\pi_1\Sigma_1$ such that $\tr\rho(\beta)\neq\pm2$. Choosing such $\beta$ we find that $\tr_{\beta\gamma}(\rho^T)$ is a nonconstant function of $T$ since $(b_1-b_4)c_3\neq0$.
	\item (c) Finally, consider the case where $\rho_1$ is upper triangular and $\tr\rho(\beta)=\pm2$ for any $\beta\in\pi_1\Sigma_1$. This implies that the image of $\rho_1$ is abelian. Suppose $\Sigma_1$ is of type $(h,m)$, and let $S=(\alpha_1,\beta_1,\dots,\alpha_h,\beta_h,\gamma_1,\dots,\gamma_m)$ be an optimal sequence of generators for $\pi_1\Sigma_1$ such that $\gamma_m=\alpha$ (up to homotopy). Let us define a one-parameter family $\rho_1^T$ of upper triangular deformations of $\rho_1=\rho_1^0$ given by setting
	$$\left\{\begin{array}{ll}\rho_1^T(\alpha_1)=\rho_1(\alpha_1)u_T,&\text{and}\\\rho_1^T(\ell)=\rho_1(\ell) &\text{for any other $\ell\in S$}\end{array}\right.$$
	if $h\geq1$, and setting
	$$\left\{\begin{array}{ll}\rho_1^T(\gamma_1)=\rho_1(\gamma_1)u_T,&\\\rho_1^T(\gamma_2)=u_T^{-1}\rho_1(\gamma_2),&\text{and}\\\rho_1^T(\ell)=\rho_1(\ell) &\text{for any other $\ell\in S$}\end{array}\right.$$
	if $h=0$ so that $m\geq3$. Then choosing $\beta=\alpha_1$ (resp.~$\beta=\gamma_1$) if $h\geq1$ (resp.~$h=0$) we find that
	$$\tr_{\beta\gamma}(\rho^T)=\tr\rho(\beta\gamma)\pm c_3T$$
	which is a nonconstant function of $T$.
\end{itemize}
Thus, in each of the cases the morphism $\A^1\to X_{k,t}^P$ given by $T\mapsto\rho^T$ is nonconstant.

\textsc{Subcase 1b:} $\rho_1$ is non-diagonal. This case is established by the same argument as in \textsc{Subcase 1a}. The only difference is that, instead of the matrices $u_T$, we consder in appropriate places of our argument the matrices
$$l_T=\begin{bmatrix}1 & 0 \\ T & 1\end{bmatrix}.$$

\textsc{Subcase 1c:} Both $\rho_1$ and $\rho_2$ are diagonal. We shall first construct a nontrivial family of upper triangular representations $\rho_1^T$ of $\pi_1\Sigma_1$ with $\rho_1^0=\rho_1$ and $\rho_1^T(\alpha)=\rho_1(\alpha)$ for all $T$ as follows.
\begin{itemize}
	\item If there exists $\beta\in\pi_1\Sigma_1$ such that $\tr\rho(\beta)\neq\pm2$, then let $\rho_1^T=u_T\rho_1 u_T^{-1}$.
	\item If $\tr\rho_1(\beta)=\pm2$ for all $\beta\in\pi_1\Sigma$, then define $\rho_1^T$ as in the treatment of possibility (c) in \textsc{Subcase 1a}.
\end{itemize}
Note that, in both cases, there exists $\beta\in\pi_1\Sigma_1$ such that the upper right corner entry of $\rho_1^T(\beta)$ is a nonconstant polynomial function of $T$. Similarly, let us construct a nontrivial family of lower triangular representations $\rho_2^T$ of $\pi_1\Sigma_2$ with $\rho_2^0=\rho_2$ and $\rho_2^T(\alpha)=\rho_2(\alpha)$ for all $T$, in such a way that there exists $\gamma\in\pi_1\Sigma_2$ such that the lower left corder entry of $\rho_2^T(\gamma)$ is a nonconstant polynomial of $T$.

Finally, let us define the representation $\rho^T=(\rho_1^T,\rho_2^T)$, which makes sense since we have $\rho_1^T(\alpha)=\rho(\alpha)=\rho_2^T(\alpha)$ for all $T$ by construction. For $\beta$ and $\gamma$ chosen as above, we see that $\tr_{\beta\gamma}(\rho^T)$ is a nonconstant polynomial in $T$. Thus the morphism $\A^1\to X_{k,t}^P$ defined by $T\mapsto\rho^T$ is nonconstant, passes through $\rho$.

\textsc{Case 2:} the curve $a$ is nonseparating. We shall write $\rho=(\rho|(\Sigma|a),\rho(\beta))=(\rho',B)$ using the notation of first part of Section \ref{sect:2.2.3}, with a choice of simple loop $\beta$ intersecting $\alpha$ exactly once. Up to conjugation, we may assume that the representation $\rho'$ is irreducible or upper triangular.

\textsc{Subcase 2a:} $\rho'$ is irreducible or $B$ is not upper triangular. Let us consider the family of representations
$$\rho^T=(\rho',B_T)$$
where
$$B_T=u_TB=\begin{bmatrix}1 & T\\ 0 & 1\end{bmatrix}B.$$
Note that $\rho^T$ is well-defined since $B^{-1}u_T^{-1}\rho(\alpha_1)u_TB=B^{-1}\rho(\alpha_1)B=\rho(\alpha_2)$ in the notation of \ref{sect:2.2.3}. Now, consider the morphism $f:\A^1\to X_{k,t}^P$ given by $T\mapsto \rho^T$. Note that we have $\rho^0=\rho$. We claim that this morphism is nonconstant. To see this, it suffices to show that there is some element $\gamma\in\pi_1(\Sigma)$ where $\tr_\gamma\circ f:\A^1\to\A^1$ is nonconstant. We have two possibilities.
\begin{itemize}
	\item If $B$ is not upper triangular, then $\gamma=\beta$ suffices.
	\item If $B$ is upper triangular but $\rho'$ is irreducible, then there exists $\delta\in\pi_1(\Sigma|a)$ which is not upper triangular. It suffices to choose $\gamma=\beta\delta$.
\end{itemize}

\textsc{Subcase 2b:} $\rho'$ and $B$ are both upper triangular, so that $\rho$ is reducible. Since $\rho$ is semisimple, $\rho$ must be diagonal and in particular $\rho(\alpha)=\pm\mathbf1$. Let $\Sigma_1\subset\Sigma$ be the subsurface of type $(1,1)$ obtained by taking a closed tubular neighborhood of $a\cup b$ where $b$ is the curve underlying $\beta$. Let $c$ be the boundary curve of $\Sigma_1$, and write $\Sigma|c=\Sigma_1\sqcup\Sigma_2$ where $\Sigma_2$ is a surface of type $(g-1,n+1)$. For convenience, we shall denote $\rho_i=\rho|\Sigma_i$.

Let us write $\rho_1=(\rho_1',B)$ in the notation of Section \ref{sect:2.2.3} (with the same choice of $\alpha$ and $\beta$ as before), and consider the family of lower triangular representations $\rho_1^T=(\rho_1',B_T)$ where
$$B_T=l_TB=\begin{bmatrix}1 & 0\\ T & 1\end{bmatrix}B.$$
Note that the lower left entry of $B_T$ is a nonconstant function of $T$. Now, without loss of generality, we may assume that our new basepoint lies on $c$. Let $\gamma$ be a simple loop parametrizing $c$, so that $\rho(\gamma)=\mathbf 1$ and $\rho_1^T(\gamma)$ is constant for all $T$. Proceeding as in \textsc{Case 1}, we can construct a nonconstant upper triangular deformation $\rho_2^T$ of $\rho_2$ with $\rho_2^0=\rho_2$ such that $\rho_2^T(\gamma)=\mathbf1$ for all $T$:
\begin{itemize}
	\item If there exists $\gamma\in\pi_1\Sigma_2$ such that $\tr\rho(\gamma)\neq\pm2$, then $\rho_2^T=u_T\rho u_T^{-1}$.
	\item If $\tr\rho_2(\beta)=\pm2$ for all $\beta\in\pi_1\Sigma$, then define $\rho_2^T$ as in the treatment of possibility (c) in \textsc{Subcase 1a} of \textsc{Case 1}.
\end{itemize}
Then the representation $\rho^T=(\rho_1^T,\rho_2^T)$ (in the notation of second part of Section \ref{sect:2.2.3}) is well-defined and has the property that $\rho^0=\rho$. Since there exists $\gamma\in\pi_1\Sigma_2$ such that the upper right entry of $\rho^T(\gamma)$ is nonconstant, the nonconstancy of consideration of $\tr_{\beta\gamma}(\rho^T)$ shows that the morphism $f:\A^1\to X_k$ given by $T\mapsto \rho^T$ is nonconstant. Moreover, since $\rho^T|(\Sigma|a)$ remains upper triangular, we see that the image of $f$ lies in $X_{k,t}^P$, as desired.
\\

\noindent \textbf{Parabolic pants.} We now consider the case where the following conditions hold:
\begin{enumerate}
	\item[\textup{(1)}] $\tr_a(X_{k,t}^P)\neq\pm2$ for every curve $a\subseteq P$,
	\item[\textup{(2)}] $(g,n,k)\neq(1,1,2)$, and
	\item[\textup{(3)}] $(X_{k,t}^P)|_{\Sigma'}$ is reducible for some connected component $\Sigma'$ of $\Sigma|P$.
\end{enumerate}
We may assume for convenience that the base point $x\in\Sigma$ lies on $\Sigma'$. Let $\gamma_1,\gamma_2,\gamma_3$ be optimal generators for $\pi_1\Sigma'$ corresponding to the boundary curves $c_1,c_2,c_3$ of $\Sigma'$. We shall write $\rho|_{\Sigma'}=(t_1,t_2,t_3)\in X(\Sigma')\simeq\A^3$ (see Example \ref{exfree}.(2)). By relabeling the boundary curves of $\Sigma'$ if necessary, we may assume that $c_1$ corresponds to a curve of $P$. In particular, $t_1\neq\pm2$ by our hypothesis above. We further assume that, if the image of $\Sigma'$ in $\Sigma$ is a surface of type $(1,1)$, then $c_1$ and $c_2$ map to the same curve in $P$.

It will be convenient for us to introduce distinguished families of representations $\pi_1\Sigma'\to\SL_2(\C)$ which are reducible. For each $T\in\C$ and $s\in\{\pm1\}$, let $\rho_T^s:\pi_1\Sigma'\to\SL_2(\C)$ be the representation determined by
$$\rho_T^s(\gamma_1)=\begin{bmatrix}\lambda & 0\\ 0 & \lambda^{-1}\end{bmatrix},\quad\rho_T^s(\gamma_2)=\begin{bmatrix}\mu & T\\ 0 & \mu^{-1}\end{bmatrix},$$
where $\lambda\in\C^\times\setminus\{\pm1\}$ and $\mu\in\C^\times$ are such that
$$\lambda^{s}\in\{z\in\C^\times:\Im(z)\geq0,z\notin[-1,1]\}$$
and
$$\left\{\begin{array}{l}t_1=\lambda+\lambda^{-1},\\t_2=\mu+\mu^{-1},\\t_3=\lambda\mu+\lambda^{-1}\mu^{-1}.\\\end{array}\right.$$
(The sign $s$ is put there simply to remove amibiguities; they do not play a significant role in the proof.) Note that the Jordan equivalence class of each $\rho_T^s$ in $X(\Sigma')$ is equal to that of $\rho|_{\Sigma'}$ since a point of $X(\Sigma')\simeq\A^3$ is determined by its traces along the boundary curves of $\Sigma'$. Up to global conjugation, we may assume that $\rho|_{\Sigma'}=\rho_{T_0}^s$ for some $T_0\in\C$ and $s\in\{\pm1\}$. Below, we proceed with a fixed choice of $s\in\{\pm1\}$, as it will not make a difference to the argument. We shall write also $\rho_{T}^{s}=\rho_{T}'$ for easier notation.

We must consider different cases, according to the relative position of $\Sigma'$ in $\Sigma$.

\textsc{Case 1:} the image of $\Sigma'$ in $\Sigma$ is a surface of type $(1,1)$. By our hypothesis, the boundary curves $c_1$ and $c_2$ map to the same curve $a\subset P$, while $c_3$ maps to a separating curve $c\subset\Sigma$. Let us write $\Sigma|c=\Sigma''\sqcup\Sigma'''$ with $\Sigma''$ being the image of $\Sigma'$. Without loss of generality, we may assume that the implicit base point $x\in\Sigma$ lies on $c$. Let $(\alpha,\beta,\gamma)$ be a sequence of optimal generators for $\pi_1\Sigma''$, such that under the immersion $\Sigma'\to\Sigma''$ we have
$$\gamma_1\mapsto\alpha,\quad\gamma_2\mapsto\beta^{-1}\alpha^{-1}\beta,\quad\gamma_3\mapsto\gamma.$$
Let us write
$$\rho(\beta)=\begin{bmatrix}B_1 & B_2\\ B_3 & B_4\end{bmatrix}.$$
The condition $\rho(\gamma_2)=\rho(\beta^{-1}\alpha^{-1}\beta)$ is then
\begin{align*}
\begin{bmatrix}\lambda & T_0\\ 0 & \lambda^{-1}\end{bmatrix}&=\begin{bmatrix} B_4 & -B_2\\ -B_3 & B_1\end{bmatrix}\begin{bmatrix}\lambda^{-1} & 0\\ 0 & \lambda\end{bmatrix}\begin{bmatrix} B_1 & B_2\\ B_3 & B_4\end{bmatrix}\\
&=\begin{bmatrix}\lambda+(\lambda^{-1}-\lambda)B_1B_4 &(\lambda^{-1}-\lambda)B_2B_4\\(\lambda-\lambda^{-1})B_1B_3 & \lambda^{-1}+(\lambda-\lambda^{-1})B_1B_4\end{bmatrix}
\end{align*}
where we have $\lambda-\lambda^{-1}\neq0$ by our hypothesis on $\rho$ that $\tr\rho(\alpha)\neq\pm2$. This shows that we must have
$$\rho(\beta)=\begin{bmatrix}0 & B_2\\  -B_2^{-1}&\tr\rho(\beta)\end{bmatrix}$$
where furthermore $(\lambda^{-1}-\lambda)B_2\tr\rho(\beta)=T_0$. Up to global conjugation of $\rho$ by a diagonal matrix (which also results in a suitable adjustment of $T_0$), we may further assume that $B_2=1$, and hence $\tr\rho(\beta)=T_0/(\lambda^{-1}-\lambda)$. Let $\rho_T'':\pi_1\Sigma''\to\SL_2(\C)$ be the representation determined by
$$\rho_T''(\alpha)=\begin{bmatrix}\lambda & 0\\ 0 &\lambda^{-1}\end{bmatrix},\quad\rho_T''(\beta)=\begin{bmatrix}0 & 1\\ -1 & \frac{T}{\lambda^{-1}-\lambda}\end{bmatrix}.$$
The preceding observations show that $\rho_{T_0}''=\rho|_{\Sigma''}$ and $\rho_T''|_{\Sigma'}=\rho_T'$ for every $T\in\C$. Note that we have
$$\rho_T''(\gamma)=\rho_T'(\gamma_3)=\begin{bmatrix}\lambda^{-2} & -\lambda T\\ 0 & \lambda^2\end{bmatrix}.$$
For $T\in\C$, let $\rho_T:\pi_1\Sigma\to\SL_2(\C)$ be the representation such that $\rho_T|\Sigma'''=\rho$ and
$$\rho_T|\Sigma''=\begin{bmatrix}1 & \lambda\frac{T-T_0}{\lambda^2-\lambda^{-2}} \\0 & 1\end{bmatrix}\rho_T''\begin{bmatrix}1 & \lambda\frac{T-T_0}{\lambda^2-\lambda^{-2}} \\0 & 1\end{bmatrix}^{-1}.$$
Here, $\lambda^2-\lambda^{-2}\neq0$ since otherwise $\tr\rho(\gamma)=\lambda^{-2}+\lambda^2=\pm2$, which was precluded. It can be directly verified that
$$\rho(\gamma)=\begin{bmatrix}1 & \lambda\frac{T-T_0}{\lambda^2-\lambda^{-2}} \\0 & 1\end{bmatrix}\rho_T''(\gamma)\begin{bmatrix}1 & \lambda\frac{T-T_0}{\lambda^2-\lambda^{-2}} \\0 & 1\end{bmatrix}^{-1}$$
so $\rho_T$ is well-defined by our discussion in Section \ref{sect:2.2.3}. We have $\rho=\rho_{T_0}$ by construction, and we see that the morphism $\A^1\to X_k$ given by $T\mapsto\rho_T$ lies in the fiber $X_{k,t}^P$. The fact that this morphism is nonconstant can be deduced from the observation that
$$\tr\rho_T(\beta)=\frac{T}{\lambda^{-1}-\lambda}$$
is a nonconstant function of $T$. Therefore, this proves the claim when the image of $\Sigma'$ in $\Sigma$ is a surface of type $(1,1)$.

\textsc{Case 2:} the boundary curves of $\Sigma'$ map to three distinct curves in $\Sigma$ under the immersion $\Sigma'\to\Sigma$ (which we shall also denote $c_1,c_2,c_3$ for simplicity). Now, let us write
\begin{align*}
\left\{\begin{array}{rl}\Sigma|c_3&=\Sigma_3\sqcup\Sigma_3^{\circ}\\\Sigma_3|c_2&=\Sigma_2\sqcup\Sigma_2^{\circ}\\
\Sigma_2|c_1&=\Sigma_1\sqcup\Sigma_1^\circ\end{array}\right.
\end{align*}
where $\Sigma_3$ is the connected component of $\Sigma|c_3$ containing $\Sigma'$ (here, $\Sigma_3^\circ$ is empty if $c_3$ is nonseparating or is a boundary curve in $\Sigma$), $\Sigma_2$ is the connected component of $\Sigma_3|c_2$ containing $\Sigma'$, and finally $\Sigma_1=\Sigma'$.

Let us extend the polynomial family of representations $\rho_T':\pi_1\Sigma'\to\SL_2(\C)$ to the polynomial family $\rho_T'':\pi_1\Sigma_2\to\SL_2(\C)$ given by
$$\rho_T''|_{\Sigma'}=\rho_{T}',\quad \rho_T''|_{\Sigma_1^\circ}=\rho|_{\Sigma_1^\circ}.$$
This is well-defined by our discussion in Section \ref{sect:2.2.3}. Note that $\rho_{T_0}''=\rho|_{\Sigma_2}$. Our next step is to extend $\rho_T''$ to a family $\rho_T''':\pi_1\Sigma_3\to\SL_2(\C)$ such that $\rho_{T_0}'''=\rho|\Sigma_3$. We need to consider three cases.
\begin{enumerate}
	\item[\textup{(1)}] Suppose $c_2$ is a boundary curve on $\Sigma_3$, so that $\Sigma_2=\Sigma_3$. Let $\rho_T'''=\rho_T''$.
	\item[\textup{(2)}] Suppose $c_2$ is a separating curve on $\Sigma_3$. We define $\rho_T'''$ by requiring
	$$\rho_T'''|_{\Sigma_2}=\rho_T'',\quad \rho_T'''|_{\Sigma_2^\circ}=\begin{bmatrix}1 & \frac{T-T_0}{\mu^{-1}-\mu}\\0 & 1\end{bmatrix}\rho|_{\Sigma_2^\circ}\begin{bmatrix}1 & \frac{T-T_0}{\mu^{-1}-\mu}\\0 & 1\end{bmatrix}^{-1}.$$
	Note that we must have $\mu^{-1}-\mu\neq0$ since otherwise $\tr\rho(c_2)=\pm2$, which was precluded. By construction, we have
	$$\rho_T''(\gamma_2)=\begin{bmatrix}1 & \frac{T-T_0}{\mu^{-1}-\mu}\\0 & 1\end{bmatrix}\rho(\gamma_2)\begin{bmatrix}1 & \frac{T-T_0}{\mu^{-1}-\mu}\\0 & 1\end{bmatrix}^{-1}=\begin{bmatrix}\mu & T\\ 0 &\mu^{-1}\end{bmatrix},$$
	so that $\rho_T'''$ is well defined by our discussion in Section \ref{sect:2.2.3}.
	\item[\textup{(3)}] Suppose $c_2$ is a nonseparating curve on $\Sigma_3$. Let $\delta$ be a simple based loop on $\Sigma_3$ intersecting $c_2$ exactly once transversally, oriented as in the first part of Section \ref{sect:2.2.3} (with the pair $(\gamma_2,\delta)$ playing the role of $(\alpha,\beta)$ there). Let $\gamma_2'$ be the based loop on $\Sigma_2$ (like $\alpha_2'$ in Section \ref{sect:2.2.3}) whose image in $\Sigma_3$ lies in the homotopy class of $\delta^{-1}\gamma_2\delta$. We define the representation $\rho_T'''$ by specifying the pair $(\rho_T'''|_{\Sigma_2},\rho_T'''(\delta))$ as in the discussion of Section \ref{sect:2.2.3}, where
	$$\rho_T'''|_{\Sigma_2}=\rho_T'',\quad\rho_T'''(\delta)=\begin{bmatrix}1 & \frac{T-T_0}{\mu^{-1}-\mu}\\0 & 1\end{bmatrix}\rho(\delta).$$
	We have $\rho_T'''(\delta^{-1})\rho_T'''(\gamma_2)\rho_T'''(\delta)=\rho(\delta^{-1})\rho(\gamma_2)\rho(\delta)=\rho(\gamma_2')=\rho_T'''(\gamma_2')$ by our construction, so that $\rho_T'''$ is well defined.
\end{enumerate}
Finally, by the same procedure, we define $\rho_T:\pi_1\Sigma\to\SL_2(\C)$ extending $\rho_T'''$ such that $\rho_{T_0}=\rho$. The important point here is that, at each stage of the above ``gluing'' process, the monodromy matrices along the two curves being glued are conjugate by a matrix of the form
$$\begin{bmatrix}1 & P(T) \\0 & 1\end{bmatrix}$$
where $P(T)\in\C[T]$ is a polynomial in $T$. By testing the trace of the representations $\rho^T$ along loops we see that the resulting morphism $\A^1\to X_{k,t}^P$ given by $T\mapsto\rho_T$ must be nonconstant, unless $\rho=\rho_{T_0}$ is reducible.

It therefore remains to treat the case where $\rho$ is reducible. Since $\rho$ is semisimple by hypothesis, it is diagonal. Given $\Sigma'\subset\Sigma|P$ as above, let us choose a different component $\Sigma''\subset\Sigma|P$ whose image in $\Sigma$ has at least one boundary curve (say $c\subset P$) in common with the image of $\Sigma'$ in $\Sigma$. Let $\Sigma_1$ be the surface of type $(0,4)$ obtained by gluing together $\Sigma'$ and $\Sigma''$ along the boundary curves corresponding to $c$. Let us choose our base point of $\Sigma$ to be on $c$, and lift it to $\Sigma_1$. We have a one-parameter family of representations $\rho_T':\pi_1\Sigma_1\to\SL_2(\C)$ whose monodromy along $c$ is constant and is such that $\rho_T'|\Sigma'=\rho_T'$ (with suitable labeling of loops) and $\rho_T'|\Sigma''$ is a similarly constructed polynomial family of lower triangular representations. Note that the morphism $\A^1\to X(\Sigma_1)$ given by $T\mapsto\rho_T'$ is nonconstant since $\tr_{\beta\gamma}(\rho_T)$ is nonconstant for any choice of boundary loops $\beta\in\pi_1\Sigma'$ and $\gamma\in\pi_1\Sigma''$ which remain boundary loops in $\Sigma_1$.

We then proceed as before with the ``gluing'' procedure to produce a family of representations $\rho_T:\pi_1\Sigma\to\SL_2(\C)$ extending $\rho_T'$, the important point being that, at each stage, the monodromy matrices along two curves being glued are conjugate by a matrix which is given by a product of matrices of the form
$$\begin{bmatrix}1 & P(T)\\ 0 & 1\end{bmatrix}\quad\text{or}\quad\begin{bmatrix}1 & 0\\ Q(T) & 1\end{bmatrix}$$
with $P,Q\in\C[T]$. By construction, the morphism $\A^1\to X_{k,t}^P$ given by $T\mapsto\rho_T$ will be nonconstant. This concludes the proof of Claim \ref{secondclaim}
\end{proof}

Claim \ref{secondclaim} implies that, if $X_{k,t}^P$ is imperfect, then it is parabolic. This implies the remaining half of Theorem \ref{linethm}.A, and concludes the proof of Theorem \ref{linethm}.

\subsection{Applications} \label{sect:3.3}
We prove Theorem \ref{theorem1} as a corollary of Theorem \ref{linethm}.

\begin{thm}{1.1}
A point $\rho\in X_k(\C)$ is degenerate if and only if
\begin{enumerate}
	\item \emph{(``parabolic curve'')} there is an essential simple closed curve $a\subset\Sigma$ such that $\tr\rho(a)=\pm2$, or
	\item \emph{(``parabolic pants'')} $(g,n,k)\neq(1,1,2)$ and there is a subsurface $\Sigma'\subset\Sigma$ of genus $0$ with $3$ boundary curves, each of which is an essential curve or a boundary curve of $\Sigma$, such that the restriction $\rho|\Sigma$ is reducible.
\end{enumerate}
In particular, there is a parabolic proper closed subvariety $Z$ of $X_k$ such that every nonconstant morphism $\A^1\to X_k$ over $\C$ is mapping class group equivalent to one with image in $Z$.
\end{thm}

\begin{proof}
The first sentence of the theorem follows directly from Theorem \ref{linethm} and Definition \ref{perfdef}. For each pants decomposition $P$ of $\Sigma$, the condition that $\rho\in X_k(\C)$ lies in some parabolic fiber $X_{k,t}^P$ is evidently nontrivial and algebraic by Theorem \ref{linethm} and the definition of perfect fibers. In particular, the union of all parabolic fibers of the form $X_{k,t}^P$ for fixed $P$ is a proper closed parabolic algebraic subvariety of $X_k$. Since there are at most finitely many isotopy classes of pants decompositions of $\Sigma$ up to mapping class group action, the last statement follows.
\end{proof}

We also record the following consequence of Theorem \ref{linethm}. By the uniformization theorem, given a marked hyperbolic structure $\sigma$ on $\Sigma$ with geodesic boundary curves there is a Fuchsian representation $\rho_\sigma:\pi_1\Sigma\to\SL_2(\R)$ such that the quotient $\H^2/\rho_\sigma(\pi_1\Sigma)$ of the upper half plane contains $(\Sigma,\sigma)$ as a Nielsen core.

\begin{corollary}
\label{polycor}
Every one-parameter polynomial deformation $\rho_t:\pi_1\Sigma\to\SL_2(\R)$ of a Fuchsian representation $\rho_0$ preserving the boundary traces is isotrivial.
\end{corollary}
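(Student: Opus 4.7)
The plan is to translate the claim into constancy of an associated morphism $f:\A_\C^1\to X_k$, and then to derive a contradiction from Theorem \ref{linethm}.B by invoking the two defining features of a Fuchsian representation with geodesic boundary: its monodromies along essential simple closed curves are hyperbolic, and its restriction to any nontrivial subsurface is irreducible.

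First I would set up the correspondence. The polynomial family $\rho_t$ is an $\R[t]$-valued point of $\Hom(\pi_1\Sigma,\SL_2)$ with fixed boundary trace vector $k=\tr_{\del\Sigma}\rho_0$, hence descends to a morphism $f:\A_\C^1\to X_k$ with $f(0)=[\rho_0]$. Because $\rho_0$ is Fuchsian it is irreducible, so its $\SL_2$-orbit is closed and equal to its Jordan equivalence class; thus $\rho_t$ is isotrivial if and only if $f$ is the constant morphism at $[\rho_0]$. I would then suppose, for contradiction, that $f$ is nonconstant.

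By Theorem \ref{linethm}.B, the image of $f$ lies in some parabolic fiber $X_{k,t_0}^P$. By the definition of perfect fibers, at least one of the following must hold: (i) there exists $a\subseteq P$ with $\tr_a(X_{k,t_0}^P)=\pm 2$, or (ii) $(g,n,k)\neq(1,1,2)$ and the restriction $(X_{k,t_0}^P)|_{\Sigma_i}$ is reducible for some connected component $\Sigma_i$ of $\Sigma|P$. Evaluating at $t=0$, these translate into properties of $\rho_0$: in case (i), $\tr\rho_0(a)=\pm 2$, and in case (ii), $\rho_0|_{\pi_1\Sigma_i}$ is reducible.

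Both possibilities are excluded by the Fuchsian hypothesis. Because the boundary of $\Sigma$ is $\rho_0$-geodesic, each boundary trace satisfies $|k_i|>2$, so $k$ cannot be the exceptional value $2$ and we are outside the case $(1,1,2)$. Applied to any essential simple closed curve $a\subset\Sigma$, the same reasoning gives that $\rho_0(a)$ is hyperbolic with $|\tr\rho_0(a)|>2$, ruling out (i). For (ii), each component $\Sigma_i$ of $\Sigma|P$ is a pair of pants, and $\rho_0|_{\pi_1\Sigma_i}$ is the restriction of a discrete faithful representation to a nonabelian free subgroup; such a representation cannot stabilize a line in $\C^2$, since that would force its image to share a fixed point on $\del\H^2$, which is incompatible with discreteness and faithfulness for a nonabelian free subgroup of $\PSL_2(\R)$. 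Hence $\rho_0|_{\pi_1\Sigma_i}$ is irreducible, contradicting (ii). The only point requiring real care is the reduction to constancy of $f$ (including the passage from an $\R[t]$-family to a $\C$-morphism and the identification with isotriviality); once that is in place, Theorem \ref{linethm}.B and the Fuchsian hypothesis combine immediately to finish the argument.
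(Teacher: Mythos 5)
Your proposal is correct and follows essentially the same route as the paper, whose proof is exactly ``immediate from Theorem \ref{linethm} and the observation that a Fuchsian representation does not lie in an imperfect fiber'': you simply spell out that observation (no essential curve has trace $\pm2$, and restrictions to pants components are irreducible since a nonabelian free group admits no faithful reducible, hence solvable, image) together with the reduction from the polynomial family to a morphism $\A^1_\C\to X_k$ and the identification of constancy with isotriviality via irreducibility of $\rho_0$. The only step worth noting is that passing from ``parabolic fiber'' (Theorem \ref{linethm}.B) to ``imperfect fiber'' uses the dichotomy of Theorem \ref{linethm}.A (equivalently the first Claim of Section 4.3), which is precisely how the paper uses it as well.
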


\begin{proof}
This is immediate from Theorem \ref{linethm} and the observation that a Fuchsian representation $\rho_0$ does not lie in an imperfect fiber $X_{k,t}^P$ for any $(P,t)$.
\end{proof}

\section{Integral points on curves} \label{sect:4}

\subsection{Nonintegrable curves} \label{sect:4.1}
Let $\Sigma$ be a surface of type $(g,n)$ with $3g+n-3>0$, and let $X_k=X_k(\Sigma)$ be a relative character variety of $\Sigma$. We shall prove Theorem \ref{theorem3} using Corollary \ref{corthm1}. We first repeat our definition of integrable curves on $X_k$ given in Section \ref{sect:1.2}.

\begin{definition}
An algebraic curve $C\subseteq X_k$ is \emph{integrable} if there is a pants decomposition $P$ of $\Sigma$ with $\tr_P$ constant along $C$. Otherwise, $C$ is \emph{nonintegrable}.
\end{definition}

As in Section \ref{sect:1.2}, given an algebraic curve $C\subset X_k$ and an arbitrary subset $A\subseteq\C$, let us denote
$$C(A)=\{\rho\in V(\C):\text{$\tr_a(\rho)\in A$ for every essential curve $a\subset\Sigma$}\}.$$

\begin{thm}{1.2}
Let $A\subset\C$ be a closed discrete subset. If $C\subset X_k$ is a nonintegrable geometrically irreducible algebraic curve, then $C(A)$ is finite, with effective bounds on sizes of the coordinates for any given embedding of $C$ into affine space.
\end{thm}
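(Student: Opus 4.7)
The plan is to apply Corollary \ref{corthm1} at each point at infinity of $C$ to set up a Runge-type argument. Let $\bar{C}$ be the smooth projective compactification of $C$, let $P_\infty=\bar{C}(\C)\setminus C(\C)$ be the finite set of points at infinity, and for each $p\in P_\infty$ let $\Ocal_p\subset F:=\C(C)$ be the associated discrete valuation ring. The inclusion $C\hookrightarrow X_k$ corresponds to a generic point $\rho_C\in X_k(F)$; since the boundary traces $k_1,\dots,k_n$ are constants lying in $\C\subset\Ocal_p$, the hypotheses of Corollary \ref{corthm1} are met at every $p\in P_\infty$.

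The main step, and the one I expect to be the principal obstacle, is to produce for each $p\in P_\infty$ an essential curve $b_p\subset\Sigma$ for which $\tr_{b_p}|_C$ is both regular at $p$ and \emph{nonconstant} on $C$. Corollary \ref{corthm1} directly supplies an essential curve $a$ with $\tr_a|_C\in\Ocal_p$, i.e.\ regular at $p$. If $\tr_a|_C$ is nonconstant, take $b_p=a$; otherwise $\tr_a|_C\equiv t\in\C$. I then cut $\Sigma$ along $a$ and restrict $\rho_C$ to each connected component of $\Sigma|a$ that is not a pair of pants: the restricted boundary traces still lie in $\C\cup\{t\}\subset\Ocal_p$, so Corollary \ref{corthm1} applies on each such component. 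Iterating, either some intermediate step produces the required nonconstant $b_p$, or the process exhausts $\Sigma$ into a complete pants decomposition all of whose curves have constant trace on $C$ — and the latter is precluded by the nonintegrability hypothesis. The technical care needed lies in checking that essential curves chosen on successively cut subsurfaces are essential in $\Sigma$ and pairwise non-isotopic, so that they genuinely assemble into a partial pants decomposition whose complexity is strictly smaller at each step and whose completion falls under the nonintegrability assumption.

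With the $b_p$ in hand, the Runge-type finiteness is straightforward. Suppose a sequence $x_n\in C(A)$ accumulates at $p\in P_\infty$ in $\bar{C}(\C)$. Because $\tr_{b_p}|_C$ is regular at $p$, the values $\tr_{b_p}(x_n)$ converge to $\tr_{b_p}(p)\in\C$, and since $A$ is closed and discrete with $\tr_{b_p}(x_n)\in A$, we must have $\tr_{b_p}(x_n)=\tr_{b_p}(p)\in A$ for all large $n$. But $\tr_{b_p}|_C$ is a nonconstant morphism $C\to\A^1$, so its fiber over any value is a finite subset of $C(\C)$, contradicting $x_n\to p\notin C(\C)$. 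Hence $C(A)$ stays away from every point of $P_\infty$ in the Euclidean topology of $\bar{C}(\C)$ and is therefore relatively compact in $C(\C)$; under any closed embedding $C\hookrightarrow\A^N$, each coordinate is a regular function on $C$ and is bounded on $C(A)$. A bounded subset of $A^N$ is finite, yielding the theorem. Effectivity of the bounds follows from the effective nature of Corollary \ref{corthm1} — the curve produced there comes from the explicit constant $C_{g,n}$ of Proposition \ref{sysprop} — together with the fact that, once the $b_p$ are exhibited, one can effectively estimate a Euclidean neighborhood of each $p$ outside of which $C(A)$ is forced to lie.
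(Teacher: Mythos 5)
Your overall route is exactly the paper's: pass to the valuations at the points at infinity of (the normalization of) $C$, apply Corollary \ref{corthm1} there, and run a Runge-type compactness argument. Your iterated cutting argument is in fact a correct and welcome elaboration of the one step the paper dispatches in a single line (``by nonintegrability we may further assume each $\tr_{a_i}$ is nonconstant''): essential curves of a component of $\Sigma$ cut along previously chosen curves remain essential and pairwise non-isotopic in $\Sigma$, the restricted boundary traces are constants and hence lie in every $\Ocal_p$, and if every curve produced had constant trace on $C$ the process would terminate in a full pants decomposition with all traces constant, contradicting nonintegrability. (One cosmetic point: since $C$ may be singular you should normalize before compactifying, as the paper does; the valuations at infinity live on $\overline C_0$.)

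The one genuine flaw is your final deduction. From relative compactness of $C(A)$ in $C(\C)$ you conclude finiteness via ``a bounded subset of $A^N$ is finite,'' but the image of $C(A)$ under a closed embedding $C\hookrightarrow\A^N$ lies in $\C^N$, not in $A^N$: membership in $C(A)$ constrains only the traces along essential \emph{simple} closed curves, whereas the embedding coordinates are arbitrary regular functions (or traces of general words, as in Fact \ref{fact}), whose values on $C(A)$ need not lie in $A$. Relative compactness alone does not rule out accumulation of $C(A)$ at an interior point of $C(\C)$. The repair is the very argument you already used at the points at infinity: $C(A)\subseteq(\tr_{b}|_C)^{-1}(A)$ for any of your nonconstant curves $b=b_p$; since $A$ is closed and discrete and $\tr_b|_C$ is nonconstant with finite fibers, this preimage is closed and discrete in $C(\C)$ (an interior accumulation point would force infinitely many points into a single finite fiber). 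A closed discrete subset of $C(\C)$ contained in a compact region is finite. This is essentially how the paper concludes: it splits off the finitely many (finite) fibers $\tr_{a_i}=z_i$ over the limiting values at infinity and observes that the remainder of $C(A)$ lies in a compact set, invoking the discreteness of $A$ once more. Your effectivity remark is in the same spirit as the paper's, which additionally notes that the curves $a_i$ can be found by enumerating isotopy classes of essential curves, with Corollary \ref{corthm1} guaranteeing termination.
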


\begin{proof}
Let $F$ be the function field of $C$ over $\C$. Let $\pi:C_0\to C$ be the normalization of $C$, and let $\overline C_0$ be a smooth compactification of $C_0$. Let
$$\{p_1,\cdots,p_m\}=\overline C_0(\C)\setminus C_0(\C)$$
be the points at infinity. For each $p_i$, we have a discrete valuation $v_i$ on $F$ given by order of vanishing at $p_i$. By Corollary \ref{corthm1} and our assumption on $C$, we deduce that there is an essential curve $a_i\subset\Sigma$ such that $v_i(\pi^*({\tr_{a_i}}))\geq0$, meaning in particular that $\pi^*({\tr_{a_i}})$ is bounded on $C_0$ near the point $p_i$. By our hypothesis that $C$ is nonintegrable, we may further assume that each $\tr_{a_i}$ is nonconstant on $C$. In particular, for each $i=1,\cdots,m$, setting $z_i=\tr_{a_i}\pi(p_i)\in\C$ the set
$$C(A)\cap \{\rho\in C(\C):\tr\rho(a_i)=z_i\}$$
is finite. It therefore remains to consider
$$C(A)\setminus\bigcup_{i=1}^m\{\rho\in C(\C):\tr\rho(a_i)=z_i\}.$$
But by the boundedness of each $\pi^*\tr_{a_i}$ near $p_i$ and the discreteness of $A$, the above set lies in a compact subset of $C(\C)$ (under the analytic topology). Again by the discreteness of $A$, this shows that $C(A)$ is finite, as desired.

To prove the last assertion, note that the desired curves $a_i$ can in principle be found effectively by simply enumerating and going through the list of all isotopy classes of essential curves on $\Sigma$, with Corollary \ref{corthm1} guaranteeing the termination of this procedure. Once these functions are found, the above proof leads to the desired effective bounds on the sizes of points in $C(A)$.
\end{proof}

Theorem \ref{theorem3} yields a broad generalization and strengthening of the following result of Long and Reid \cite{lr}. Let $M$ be a finite volume hyperbolic three-manifold with a single cusp. By the work of Thurston, its character variety $X(M)$ has an irreducible component $C(M)$, containing the faithful discrete representation of $\pi_1 M$, which is an algebraic curve. As in Section \ref{sect:1.2}, let $O_d$ denote the ring of integers of the imaginary quadratic field $\Q(\sqrt{-d})$ for each squarefree integer $d>0$.

\begin{ntheorem}[Long and Reid \cite{lr}] For $M$ as above, the set $\bigcup_{d>0} C(M)(O_d)$ is finite.
\end{ntheorem}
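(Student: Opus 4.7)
The plan is to realize the canonical component $C(M)$ as a geometrically irreducible nonintegrable algebraic curve inside a relative character variety $X_k(\Sigma)$, and then invoke Theorem \ref{theorem3} with the closed discrete subset $A = \bigcup_{d > 0} O_d \subset \C$. To build the embedding, I would pick a finite presentation of $\pi_1 M$, let $S$ be the boundary of a regular neighborhood in $\R^3$ of its presentation $2$-complex (stabilizing by connect sums with $T^2$ if necessary so that the genus is at least $1$), and take $\Sigma = S \setminus D$ for a small open disk $D \subset S$. Since $\partial\Sigma$ bounds a disk in $S$, the induced surjection $\pi_1 \Sigma \twoheadrightarrow \pi_1 S \twoheadrightarrow \pi_1 M$ kills $\partial\Sigma$, and dually this yields a closed embedding $X(M) \hookrightarrow X_{2}(\Sigma)$ realizing $C(M)$ as an irreducible algebraic curve $C \subset X_2(\Sigma)$.

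A direct lattice estimate shows that $A = \bigcup_{d > 0} O_d$ is closed and discrete in $\C$: in the disk $|z| \leq R$, for any squarefree $d > 4R^2$ the only elements of $O_d$ are rational integers in $[-R, R]$, while only finitely many $d \leq 4R^2$ remain and each contributes only finitely many points. Moreover, for any $\rho \in C(M)(O_d)$ and any essential curve $a \subset \Sigma$ with image $\bar a \in \pi_1 M$, one has $\tr_a(\rho) = \tr \rho(\bar a) \in O_d \subseteq A$, so $\bigcup_{d > 0} C(M)(O_d) \subseteq C(A)$ under the embedding above.

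The main step is to verify that $C$ is nonintegrable. Suppose for contradiction that there is a pants decomposition $P$ of $\Sigma$ with $\tr_a|_C$ constant for every $a \subseteq P$; writing $\bar a \in \pi_1 M$ for the image of $a$, each $\tr_{\bar a}$ is then constant on $C(M)$. By Thurston--Weil rigidity and the theory of hyperbolic Dehn surgery, the tangent space to $C(M)$ at the discrete faithful representation $\rho_0$ is the one-dimensional cohomology $H^1(\pi_1 M, \Ad \rho_0)$, spanned by a cocycle $u$ pairing nontrivially with the cusp holonomy. Constancy of each $\tr_{\bar a}$ forces $\tr(\rho_0(\bar a)\, u(\bar a)) = 0$ for every $a \subseteq P$, which, combined with the Zariski density of $\rho_0(\pi_1 M)$ in $\SL_2(\C)$, yields a contradiction provided the presentation of $\pi_1 M$ (and hence $\Sigma$) has been chosen so that no pants decomposition of $\Sigma$ consists entirely of curves killed by $\pi_1 \Sigma \twoheadrightarrow \pi_1 M$. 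Granted nonintegrability, Theorem \ref{theorem3} immediately gives $|C(A)| < \infty$, completing the proof.

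The main obstacle is this nonintegrability verification. Kernels of surface group surjections onto $\pi_1 M$ may in principle contain simple closed curves forming an entire pants decomposition, so the argument requires a careful selection of the presentation for $\pi_1 M$ (using a generating set whose traces appear along distinguished curves of $\Sigma$) to ensure that any pants decomposition of $\Sigma$ picks up at least one curve whose trace genuinely varies along the Thurston--Weil deformation of $\rho_0$.
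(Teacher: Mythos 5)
Your reduction to Theorem \ref{theorem3} with $A=\bigcup_{d>0}O_d$ is the right general idea, and your checks that $A$ is closed and discrete and that $\bigcup_{d>0}C(M)(O_d)$ lands in $C(A)$ are fine; but the step you yourself flag as the main obstacle --- nonintegrability of the image curve --- is a genuine gap, and for the kind of embedding you propose it in fact \emph{fails}. First, a presentation $2$-complex of $\pi_1M$ need not embed in $\R^3$, so the regular-neighborhood construction already needs repair; the standard substitute is to take a handlebody $H_g$ whose free fundamental group surjects onto $\pi_1M$ and set $\Sigma=\partial H_g$ (possibly minus a disk). But then the kernel of $\pi_1\Sigma\twoheadrightarrow\pi_1M$ contains an entire pants decomposition: the $3g-3$ meridian curves bounding disjoint essential disks that cut $H_g$ into balls, together with (in the once-punctured case) one further curve in the four-holed piece containing $\partial\Sigma$, which becomes isotopic to a meridian after capping off $\partial\Sigma$ and hence also dies in $\pi_1M$. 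Every curve of this pants decomposition has trace identically $2$ on the image of $X(M)$, so the image curve is \emph{integrable} and Theorem \ref{theorem3} does not apply. Your proposed rescue via the tangent space at the discrete faithful representation cannot help here: for a pants curve $a$ in the kernel one has $u(\bar a)=0$, so the relation $\tr(\rho_0(\bar a)u(\bar a))=0$ is vacuous, and you give no construction of a surjection for which no pants decomposition consists entirely of kernel (or otherwise constant-trace) curves --- that existence claim is precisely what is missing.

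The paper avoids all of this by going through the cusp rather than through a surface-group surjection: the inclusion of the cuspidal torus $T\to M$ induces a morphism $C(M)\to X(T)$, and $X(T)$ is the Cayley cubic, isomorphic to $X_2(\Sigma)$ for $\Sigma$ of type $(1,1)$. For this $\Sigma$ a pants decomposition is a \emph{single} essential curve, and essential curves correspond to peripheral elements of $\pi_1M$, whose traces are nonconstant on the canonical component by hyperbolic Dehn surgery; this is exactly the content of Lemma 3.4 of Long--Reid, which the paper cites as the crucial ingredient, and it says precisely that the image of $C(M)$ in $X_2(\Sigma)$ is a nonintegrable curve. Theorem \ref{theorem3} then bounds the points of that image with all simple-loop traces in $A$, and since $C(M)\to X_2(\Sigma)$ is a nonconstant morphism of irreducible curves it has finite fibers, giving finiteness of $\bigcup_{d>0}C(M)(O_d)$. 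To salvage your route you would have to prove that a surjection $\pi_1\Sigma\twoheadrightarrow\pi_1M$ with the required nonintegrability property exists --- a nontrivial statement about simple closed curves in kernels and about which elements of $\pi_1M$ have constant trace on $C(M)$ --- or else switch to the peripheral restriction as the paper does.
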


The inclusion of the cuspidal torus $T\to M$ induces a morphism from $C(M)$ into the so-called Cayley cubic algebraic surface $X(T)$, which is isomorphic to $X_2(\Sigma)$ where $\Sigma$ is a surface of type $(1,1)$. The crucial ingredient in the proof of the above \cite[Lemma 3.4]{lr} is that the image of $C(M)$ in $X_2(\Sigma)$ is a nonintegrable curve. Given this, Theorem \ref{theorem3} readily recovers the above theorem on the integral points of $C(M)$, bypassing a somewhat involved arithmetical argument in \cite{lr}.

\subsection{Application of Baker's theory} \label{sect:4.2}
In this interlude, we demonstrate a result on certain lattice points lying on algebraic curves in algebraic tori. We shall obtain it as a straightforward consequence of Baker's theory on linear forms in logarithms. Let $\G_m$ denote the multiplicative group, and let $d\geq1$ be an integer. Let $\overline\Q\subset\C$ be the field of algebraic numbers. Fix a sequence of real algebraic numbers
$$(z_1,\cdots,z_d)\in\G_m^d(\overline\Q\cap\R)$$
with each $z_i\neq\pm1$, and let $\Gamma\leq \G_m^d(\overline\Q\cap\R)$ be the subgroup consisting of elements of the form $(z_1^{l_1},\cdots,z_d^{l_d})$ with $l_i\in\Z$. Let $C\subset\G_m^d$ be an irreducible algebraic curve defined over $\overline\Q$, and fix $p\in\G_m^d(\overline\Q)$. Our goal is to prove the following.

\begin{proposition}
\label{baker}
One of the following holds:
\begin{enumerate}
	\item $C(\overline\Q)\cap(\Gamma\cdot p)$ is finite, or
	\item $C$ is a translate of an algebraic subtorus $T\leq\G_m^d$ defined over $\overline\Q$ and invariant under some nontrivial $z\in\Gamma$.
\end{enumerate}
Moreover, $C(\overline\Q)\cap(\Gamma\cdot p)$ can be effectively determined.
\end{proposition}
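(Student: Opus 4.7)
The plan is to translate the intersection problem into a system of $S$-unit equations on the lattice $\Z^d$ and then invoke the effective structure theorem for such equations arising from Baker's bound on linear forms in logarithms.

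Let $f_1, \ldots, f_r \in \overline\Q[x_1^{\pm1}, \ldots, x_d^{\pm1}]$ be Laurent polynomials cutting out $C$ in $\G_m^d$, with $f_j(x) = \sum_{\alpha \in A_j} c_{j,\alpha}\, x^{\alpha}$. Writing $\xi(l) := (z_1^{l_1} p_1, \ldots, z_d^{l_d} p_d)$ for $l = (l_i) \in \Z^d$, the condition $\xi(l) \in C$ becomes, for each $j$, the exponential equation
\begin{equation*}
\sum_{\alpha \in A_j} b_{j,\alpha}\, u_\alpha(l) = 0, \qquad b_{j,\alpha} := c_{j,\alpha}\, p^{\alpha}, \quad u_\alpha(l) := \prod_{i=1}^d z_i^{l_i \alpha_i},
\end{equation*}
in which each $u_\alpha(l)$ is an $S$-unit in $\overline\Q$ for a fixed finite set of places $S$ depending only on the data. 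Since each $z_i \neq \pm1$ has infinite order in $\R^{\times}$, the map $l \mapsto \xi(l)$ is injective, and hence $C(\overline\Q) \cap (\Gamma \cdot p)$ is in bijection with the solution set $L \subseteq \Z^d$ of this system.

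The key input is then the effective theorem---via Baker's method, as developed by Laurent, van der Poorten--Schlickewei, and Evertse---that $L$ is a finite union of translates of subgroups of $\Z^d$, and that these translates can be determined explicitly in terms of the heights of the $z_i$, $p_i$, and $c_{j,\alpha}$. If every such translate is a singleton, then $L$ and hence $C(\overline\Q) \cap (\Gamma \cdot p)$ is finite and explicitly computable, placing us in case (1). Otherwise, some translate $l^{(0)} + H \subseteq L$ contains a nontrivial subgroup $H \leq \Z^d$; for any $0 \neq h \in H$, the element $z := (z_1^{h_1}, \ldots, z_d^{h_d}) \in \Gamma$ is nontrivial, and its cyclic orbit $\{z^n \xi(l^{(0)}) : n \in \Z\}$ is an infinite subset of $C(\overline\Q)$. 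Since $C$ is an irreducible curve, $C$ must coincide with the Zariski closure of this orbit, which is the translate by $\xi(l^{(0)})$ of the one-dimensional subtorus $T := \overline{\langle z \rangle}^{\mathrm{Zar}}$ (necessarily connected, for dimensional reasons). The subtorus $T$ is defined over $\overline\Q$ and contains $z$, so $C = \xi(l^{(0)}) \cdot T$ is $z$-invariant, giving case (2).

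The main obstacle is the effective packaging of the $S$-unit bounds: one must systematically enumerate minimal vanishing subsums of the above exponential equations and apply Baker's effective inequality on linear forms in logarithms to each nondegenerate subsum, then assemble the resulting bounded and unbounded loci into the claimed decomposition of $L$. This bookkeeping, while classical, is where all of the transcendence input is concentrated, and the resulting height bounds translate directly into an explicit list of candidates $l$ to check.
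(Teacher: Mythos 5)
There is a genuine gap at the step you call the ``key input.'' The structure theorem you invoke --- that the solution set $L\subseteq\Z^d$ of a system of multi-term equations $\sum_\alpha b_{j,\alpha}u_\alpha(l)=0$ in elements of a finitely generated subgroup of $\overline\Q^\times$ is a finite union of translates of subgroups of $\Z^d$ --- is indeed true (it is essentially Laurent's theorem on $C\cap\Gamma$, resting on the unit-equation results of Evertse and van der Poorten--Schlickewei), but those results are proved via the Subspace Theorem and are \emph{ineffective}. Baker's method is only known to yield effective bounds for unit equations in two unknowns (equivalently, for a linear form in logarithms that you can independently show to be very small and nonzero); for vanishing or near-vanishing subsums with three or more terms there is no effective analogue, and ``apply Baker's effective inequality to each nondegenerate subsum'' cannot be carried out, because nothing forces such a subsum to be small compared to its largest term. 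Since the whole point of the proposition (and of its use later in the paper) is the effective determination of $C(\overline\Q)\cap(\Gamma\cdot p)$, attributing effectivity to Laurent/Evertse/van der Poorten--Schlickewei is exactly the missing --- and in general unavailable --- ingredient. A symptom of the problem is that your argument never uses the hypothesis that the $z_i$ are \emph{real} algebraic numbers distinct from $\pm1$, which is what makes an effective argument possible here.

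The paper's proof is built precisely to avoid this: it induces on $d$ to reduce to $d=2$, replaces $\Gamma$ by a monoid with $|z_1|,|z_2|>1$, and then exploits the archimedean ordering of the monomials of the single defining polynomial $f$. Passing to an infinite subset of solutions on which one term dominates, it isolates the top weighted-homogeneous part of $f$, factors it as a product of binomials $\alpha_jX^d-Y^e$, and deduces that for infinitely many solutions the quantity $\bigl|\alpha_1 x^{md}/y^{ne}-1\bigr|$ is exponentially small in $\max\{m,n\}$. This produces a \emph{specific} linear form in three real logarithms with an exponentially small upper bound, to which Baker's effective lower bound applies directly, forcing $\log|x|$ and $\log|y|$ to be $\Q$-linearly dependent; the translate-of-a-subtorus conclusion then follows by an explicit monomial parametrization. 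Your reduction of case (2) from an infinite orbit (and the remark on connectedness of the Zariski closure of $\langle z\rangle$, which needs a word about replacing $z$ by a power) is fine as qualitative bookkeeping, but the effectivity claim needs an argument of the dominant-term type above rather than a citation to the general unit-equation literature.
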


\begin{proof}
We may assume $p=(1,\cdots,1)$ up to translation. We proceed by induction on $d\geq1$. The claim is obvious if $d=1$, so we may assume that $d\geq2$. Assuming the result in the case $d=2$, we shall first show below how the general case follows.

Suppose that $C(\overline\Q)\cap\Gamma$ is infinite. Up to rearranging the factors of $\G_m^d$, we may assume that the projection morphism $\pi:\G_m^{d}\to\G_m^{d-1}$ onto the first $d-1$ factors is nonconstant along $C$, since otherwise our claim is clear. Now, the Zariski closure $C'$ of $\pi(C)$ in $\G_m^{d-1}$ contains infinitely many points lying in the group
$$\Gamma'=\{(z_1^{l_1},\cdots,z_{d-1}^{l_{d-1}}):l_i\in\Z\}\leq\G_m^{d-1}(\overline\Q\cap\R).$$
By inductive hypothesis, $C'$ is a translate of an algebraic subtorus $T'\leq\G_m^{d-1}$, and $C'$ is preserved by some nontrivial $z'\in\Gamma'$. Up to translation of $C$ within $\G_m^d$, we may assume that $C'=T'$. Let us also denote by $z'\in\G_m(\overline\Q\cap\R)$ the element corresponding to $z'$ under the identification $T'\simeq\G_m$. Applying the case $d=2$ of the proposition to the immersion $C\hookrightarrow T'\times\G_m\subset\G_m^d$, we see that $C$ is a translate of an algebraic subtorus in $T'\times\G_m$ which is invariant under some nontrivial element of $\langle z'\rangle\times\langle z_d\rangle\leq\Gamma$. This gives the desired result.

It remains to prove the proposition in the case $d=2$. We begin by making a number of simplifying remarks. First, it suffices to prove the proposition with $\Gamma$ replaced by the monoid
$$\{(z_1^{l_1},z_2^{l_2}):l_i\in\Z_{\geq0}\},$$
since $\Gamma$ is a union of finitely many monoids of the above type (obtained by replacing $(z_1,z_2)$ with $(z_1^{\pm1},z_2^{\pm1})$). Similarly, it suffices to treat the case where $|z_1|,|z_2|>1$, since we reduce to this case by applying inversions to factors of $\G_m^2$ appropriately.

Let $f\in\overline\Q[X,Y]$ be an irreducible polynomial defining the Zariski closure of $C$ in $\A^2$ under the obvious embedding $\G_m^2\to\A^2$. In what follows, we shall write $(x,y)=(z_1,z_2)$ for convenience, so that in particular $|x|,|y|>1$ by our assumption above. Assuming that the set
$$S=\{(m,n)\in\Z_{\geq0}^2:f(x^m,y^n)=0\}$$
is infinite, we shall show that $C$ is a translate of algebraic torus and is preserved by some nontrivial $z\in\Gamma$. Let us write
$$f(X,Y)=\sum_{i=1}^r a_iX^{d_i}Y^{e_i}$$
with nonzero $a_i\in\overline\Q$ and $d_i,e_i\in\Z_{\geq0}$, such that $(d_i,e_i)\neq(d_j,e_j)$ whenever $i\neq j$. Note that the number $r$ of terms in the above sum is at least $2$ since $x,y\neq0$. Upon relabeling the terms and passing to an infinite subset of $S$, we may assume that
$$(r-1)|a_2x^{d_2m}y^{e_2n}|\geq|a_1x^{d_1m}y^{e_1n}|\geq\cdots\geq|a_rx^{d_rm}y^{e_rn}|$$
for all $(m,n)\in S$. Let $q=\gcd(d_1-d_2,e_1-e_2)>0$, and define $d,e\geq 0$ to be coprime integers such that $|d_1-d_2|=qd$ and $|e_1-e_2|=qe$. The above inequalities imply that there is a constant $K\geq1$ with
\begin{align*}
\tag{$*$}K^{-1}\leq\left|\frac{(x^m)^d}{(y^n)^e}\right|\leq K
\end{align*}
for all $(m,n)\in S$. Assigning weights $e$ and $d$ to the variables $X$ and $Y$ respectively, let us write
$$f(X,Y)=g(X,Y)+h(X,Y)$$
where $g$ is the top degree weighted homogeneous part of $f$, and the remainder $h$ is a polynomial of lower degree. We remark that $g$ must include the term $a_1X^{d_1}Y^{e_1}$. Indeed, for any $i=1,\cdots,r$ we must have
\begin{align*}
|a_i|K^{-d_i/d}|y^{n}|^{(d_ie+de_i)/d}&\leq |a_i(x^m)^{d_i}(y^n)^{e_i}|\\
&\leq|a_1(x^m)^{d_1}(y^n)^{e_1}|\leq|a_1|K^{d_1/d}|y^n|^{(d_1e+de_1)/d}.
\end{align*}
Thus, if $d_1e+de_1<d_ie+de_i$, then $|y|^n$ must be bounded for every $(m,n)\in S$, and by a similar argument $|x|^m$ must be bounded for every $(m,n)\in S$, which is a contradiction. We also have
\begin{align*}
\deg(a_2X^{d_2}Y^{e_2})&=d_2e+de_2=(d_2-d_1)e+d_1e+d(e_2-e_1)+de_1\\
&=(qde-qde)+(d_1e+de_1)=\deg(a_1X^{d_1}Y^{e_1})
\end{align*}
by definition, so $g$ also includes the term $a_2X^{d_2}Y^{e_2}$. In fact, we see that $a_i X^{d_i}Y^{e_i}$ is a term in $g$ if and only if $(d_i,e_i)=(d_1+q'd,e_1-q'e)$ for some $q'\in\Z$. This shows that we can factorize $g(X,Y)$ as
$$g(X,Y)=cX^{\lambda}Y^{\mu}\prod_j(\alpha_j X^d-Y^e)^{\nu_j}$$
for some nonzero $c\in\overline\Q$ and pairwise distinct nonzero $\alpha_j\in\overline\Q$. Up to relabeling and passing to an infinite subset of $S$, there exists some $\kappa>0$ with
\begin{align*}
\left|\alpha_1\frac{(x^m)^d}{(y^n)^e}-1\right|^{\nu_1}&=\frac{|g(x^m,y^n)|}{|c(x^{m})^{\lambda}(y^n)^{\mu}\prod_{j\neq1}(\alpha_j x^{md}-y^{ne})^{\nu_j}|}\frac{1}{|y^{ne}|^{\nu_1}}\\
&=\frac{1}{|c(x^{m}/y^n)^{\lambda}|\prod_{j\neq1}|\alpha_j (x^{md}/y^{ne})-1|^{\nu_j}}\frac{|h(x^m,y^n)|}{|y^{ne}|^{(\lambda +\mu)/e+\sum_j\nu_j}}\\
&\ll \frac{1}{|y^{ne}|^{\kappa}}\ll\frac{1}{\max\{|x^d|^m,|y^e|^n\}^{\kappa}}.
\end{align*}
for every $(m,n)\in S$. Here, we have used the inequality $(*)$. We remark that we must have $\alpha_1\in\overline\Q\cap\R$. In particular, there exists $M>1$ such that
$$|\alpha_1|\frac{|x^d|^m}{|y^e|^n}=1+O\left(\frac{1}{M^{\max\{m,n\}}}\right).$$
for every $(m,n)\in S$.
Taking logarithms and using the fact that $|\log(1+\xi)|\leq 2|\xi|$ for every $|\xi|\leq1/2$, upon passing to a suitable infinite subset of $S$ we have
$$|\log|\alpha_1|+m\log |x^d|-n\log |y^e||\ll\frac{1}{M^{\max\{m,n\}}}.$$
By Baker's theorem \cite[Theorem 3.1]{baker}, if $\log|\alpha_1|$, $\log|x^d|$, and $\log|y^e|$ were $\Q$-linearly independent, then we would have
$$|\log|\alpha_1|+m\log |x^d|-n\log |y^e||\gg\max\{m,n\}^{-D}$$
for some effective constant $D>0$. Therefore, what we have obtained shows that $x$ and $y$ must be multiplicatively dependent. Let $v>1$ be the unique real algebraic number such that $(|x|,|y|)=(v^a,v^b)$ for positive coprime integers $a,b\in\Z$. We may rewrite $(*)$ as
$$K^{-1}\leq v^{adm-ben}\leq K.$$
This shows that $adm-ben$ takes at most finitely many values, and so there is some $t\in\Z$ such that, up to passing to an infinite subset of $S$, we have $adm-ben=t$ for all $(m,n)\in S$. Fix $(m_0,n_0)\in S$. We then have
$$ad(m-m_0)=be(n-n_0)$$
for every $(m,n)\in S$. Since $ad$ and $be$ are coprime, for each $(m,n)\in S$ we have $N=N_{m,n}\in\Z$ with $(m,n)=(m_0,n_0)+N\cdot(be,ad)$. Consider now the morphism
$$\Phi:\A^1\to\A^2$$
given by $\Phi(T)=(x^{m_0}T^e,y^{n_0}T^d)$. Note that $\Phi$ restricts to a morphism $\G_m\to\G_m^2$ so that the image of $\Phi$ is a translation of an algebraic torus. Furthermore, given $(m,n)\in S$ and $N=N_{m,n}$ as above, we have
\begin{align*}
\Phi(v^{Nab})&=(x^{m_0}v^{Nabe},y^{n_0}v^{Nabd})=(x^{m_0}(v^{abe})^N,y^{m_0}(v^{abd})^N)\\
&=(x^{m_0}x^{beN},y^{n_0}y^{adN})=(x^m,y^n).
\end{align*}
This shows that the Zariski closure of $C$ has infinitely many points in common with the image of $\Phi$. This implies that $\Phi(\G_m)=C$. This proves Proposition \ref{baker}.
\end{proof}

\subsection{Integrable curves} \label{sect:4.3}
Fix a pants decomposition $P\subset\Sigma$. Let $\Gamma_P$ be the free abelian subgroup of rank $3g+n-3$ in the mapping class group $\Gamma(\Sigma)$ generated by Dehn twists along the curves in $P$. The action of $\Gamma_P$ on $X_k$ preserves the fiber $X_{k,t}^P$ for each $t\in X(P,\C)$. For fixed $t=(t_1,\cdots,t_{3g+n-3})$, let us fix a sequence $(z_1,\cdots,z_{3g+n-3})\in(\C^{\times})^{3g+n-3}$ of complex numbers such that
$$z_i+z_i^{-1}=t_i\quad\text{for all}\quad i=1,\cdots,3g+n-3.$$
Let $\Gamma_z$ be the subgroup of $\G_m^{3g+n-3}(\C)$ generated by translations by $z_i$ (in the multiplicative sense) in the $i$th coordinate. Our discussion in Section \ref{sect:2.2.3} leads us to the following result.

\begin{proposition}
\label{perfprop}
If $X_{k,t}^P$ is perfect, then there is a morphism
$$F:\G_m^{3g+n-3}\to X_{k,t}^P$$
of schemes (defined over $\overline\Q$ if $k$ and $t$ are algebraic) satisfying the following:
\begin{enumerate}
	\item At the level of complex points, $F$ is surjective with finite fibers,
	\item the action of $\Gamma_z$ on $\G_m^{3g+n-3}$ lifts the $\Gamma_P$-action on $X_{k,t}^P$.
\end{enumerate}
\end{proposition}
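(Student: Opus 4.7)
The plan is to construct $F$ by inverting the reconstruction procedure of Section 2.3, attaching one $\G_m$-parameter to each curve of $P$ via identification of the centralizer of $\diag(z_i, z_i^{-1})$ in $\SL_2$ with $\G_m$ through $\beta_i \mapsto \diag(\beta_i, \beta_i^{-1})$. The point of this identification is that left-multiplication by $\diag(z_i, z_i^{-1})$ becomes translation by $z_i \in \G_m$, so the Dehn-twist action will match $\Gamma_z$ by construction.

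First, treat the pants pieces. Write $\Sigma|P = \Sigma_1 \sqcup \cdots \sqcup \Sigma_{2g+n-2}$. In the non-exceptional case, perfectness asserts that each restriction $(X_{k,t}^P)|_{\Sigma_j}$ is an irreducible closed point of $X(\Sigma_j) \cong \A^3$, so there is a unique $\SL_2(\C)$-conjugacy class of irreducible representations $\rho_j^0 \colon \pi_1 \Sigma_j \to \SL_2(\C)$ with the prescribed boundary traces; fix a representative. Since $t_i = z_i + z_i^{-1} \neq \pm 2$, the monodromy along any boundary of $\Sigma|P$ above $a_i \in P$ is regular semisimple with eigenvalues $\{z_i, z_i^{-1}\}$, and its centralizer is the maximal torus $T \cong \G_m$.

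Next, glue. Apply the formulas of Section 2.3 along each curve $a_i \in P$. The gluing datum across $a_i$ is an element of $\SL_2(\C)$ conjugating one side's monodromy into the other's, forming a torsor under the centralizer $T \cong \G_m$, which we identify with $\G_m$ using our choice of $z_i$ and an auxiliary base point. Doing this simultaneously for all $a_i \in P$ defines
$$F \colon \G_m^{3g+n-3} \to X_{k,t}^P,$$
defined over $\overline\Q$ when $k,t$ are. Surjectivity is immediate: any $\rho \in X_{k,t}^P$ restricts on each $\Sigma_j$ to a conjugate of $\rho_j^0$, and the reconstruction extracts gluing data $(\beta_i)_i$ mapping to $\rho$. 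Finite fibers follow from the residual Weyl group ambiguities at each diagonalization (a finite $2$-group). The $\Gamma_P$-equivariance is immediate: $\tau_{a_i}$ left-multiplies the $a_i$-gluing element by $\rho(\alpha_i) \sim \diag(z_i, z_i^{-1})$ (nonseparating case) or conjugates the adjacent representation by it (separating case), each corresponding to $\beta_i \mapsto z_i \beta_i$ under our identification.

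The main obstacle is the exceptional case $(g,n,k) = (1,1,2)$. Here $P = \{a\}$ consists of a single nonseparating curve, and the induced pants representation has boundary traces $(t, t, 2)$, which is \emph{always} reducible by Lemma \ref{redlem}, so the uniqueness argument above breaks down. To handle this, I would argue directly from the Markov-type equation: setting $\tr_a = t$ in the Cayley cubic $x^2 + y^2 + z^2 - xyz = 4$ cuts out the affine conic $y^2 - tyz + z^2 = 4 - t^2$, which is smooth since $t \neq \pm 2$ and admits an explicit $\G_m$-parametrization via its two asymptotic directions $y = z_i^{\pm 1} z + m$. This parametrization lifts to the representation level via $\rho(\alpha) = \diag(z, z^{-1})$ together with an upper-triangular $\rho(\beta)$ normalized by an off-diagonal $\G_m$-parameter; a direct calculation, using that reducibility forces the commutator trace to be $2$, confirms surjectivity onto $X_{2,t}^a$ with finite fibers, and that $\tau_a$ again acts as $\beta \mapsto z \beta$.
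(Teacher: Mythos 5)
Your construction is essentially the paper's own: fix a base local system whose monodromy along each curve $a_i\subset P$ is normalized to $\begin{bmatrix}z_i&0\\0&z_i^{-1}\end{bmatrix}$, parametrize the regluings along $P$ by the centralizer tori $\G_m$, and deduce surjectivity, finiteness of fibers, and $\Gamma_z$-equivariance from the uniqueness (up to conjugacy) of the irreducible pants restrictions guaranteed by perfectness together with the explicit Dehn-twist lifts of Section 2.3. The only divergence is that you flesh out the exceptional case $(g,n,k)=(1,1,2)$, which the paper sets aside as elementary; your conic argument is correct there, but be careful that the $\G_m$-parameter must be the eigenvalue (diagonal) entry of the upper-triangular $\rho(\beta)$ (equivalently the character value $\chi(\beta)$), since the off-diagonal entry is invisible to Jordan equivalence and would make the map constant.
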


\begin{proof}
We shall describe the map induced by $F$ on the complex points. It will be conceptually clear from our construction, even if laborious to show, that the map is induced from a morphism of schemes, and is moreover defined over $\overline\Q$ provided that $k$ and $t$ are algebraic.

To construct $F$, we first fix an $\SL_2(\C)$-local system $\rho_0$ on $\Sigma$ whose class lies in the fiber $X_{k,t}^P$. Let us write $P=a_1\sqcup\cdots\sqcup a_{3g+n-3}$ with each $a_i$ a curve, on which we fix a base point $x_i\in a_i$. Let $\alpha_i$ be a choice of a simple loop based at $x_i$ parametrizing $a_i$. We fix a trivalization of the fiber of $\rho$ above each $x_i$ so that the monodromy along $\alpha_i$ is given by a diagonal matrix of the form
\begin{align*}
\tag{$*$}\begin{bmatrix}z_i & 0 \\ 0 & z_i^{-1}\end{bmatrix}\in\SL_2(\C).
\end{align*}
This is possible since $t_i\neq\pm2$ by our hypothesis that $X_{k,t}^P$ is perfect. Setting aside the case $(g,n,k)=(1,1,2)$ which is elementary, this hypothesis also implies that the restriction of $\rho_0$ to each connected component of $\Sigma|P$ is irreducible, and in fact determines the isomorphism type of such restriction for every local system $\rho$ whose class lies in $X_{k,t}^P$.

For each $i=1,\cdots,3g+n-3$, let us denote by $a_i'$ and $a_i''$ the two boundary curves on $\Sigma|P$ corresponding to $a_i$, and let $(x_i',\alpha_i')$ and $(x_i'',\alpha_i'')$ be the corresponding lifts of $(x_i,\alpha_i)$, respectively. We shall assume that we have chosen the labelings so that the interior of $\Sigma|a$ lies ot the left as one travels alons $\alpha_i'$. The above observation shows that any local system with class in $X_{k,t}^P$ is determined by the isomorphisms of local systems $\rho_0|_{a_i'}$ and $\rho_0|_{a_i''}$ compatible with the gluing of $a_i'$ and $a_i''$ in $\Sigma$. For instance, $\rho_0$ itself is the local system determined by the identity isomorphisms
$$\id:\rho_0|_{a_i'}\simeq\rho_0|_{a_i}\simeq\rho_0|_{a_i''}.$$
More generally, an isomorphism of local systems $\rho_0|_{a_i'}\simeq\rho_0|_{a_i''}$ is specified by an element in the centralizer of the matrix $(*)$ above (namely, the group of diagonal matrices of determinant one). Thus, we have a map
$$F:\G_m^{3g+n-3}(\C)\to X_{k,t}^P(\C)$$
sending $(v_1,\cdots,v_{3g+n-3})\in\G_m^{3g+n-3}(\C)$ to the class of the local system determined by the isomorphisms
$$\begin{bmatrix}v_i & 0\\ 0 & v_i^{-1}\end{bmatrix}:\rho_0|_{a_i'}\simeq\rho_0|_{a_i''}.$$
The fact that this is surjective follows from the above discussion. To see that $F$ has finite fibers, note that if we glue the local systems $\rho_0|_{\Sigma'}$ on the components of $\Sigma'$ of $\Sigma|P$ along the curves $a_1,\cdots,a_{3g+n-3}$ in a fixed order, at each stage the resulting local system is uniquely determined, except possibly when $a_i$ is a separating curve in which case there is a double ambiguity.

Finally, the compatibility of the action of $\Gamma_z$ with the action of $\Gamma_P$ under $F$ follows from the our description of lifts of Dehn twists in Section \ref{sect:2.2.3}.
\end{proof}

\begin{corollary}
\label{perfectcor}
Let $X_{k,t}^P$ be a perfect fiber. Then
\begin{enumerate}
	\item $|\Gamma_P\backslash X_{k,t}^P(A)|<\infty$ for any closed discrete $A\subset\R$, and
	\item if no coordinate of $t\in X(P,\C)\simeq\A^{3g+n-3}$ lies in $[-2,2]$, then
	$$|\Gamma_P\backslash X_{k,t}^P(A)|<\infty$$
	for any closed discrete $A\subset\C$.
\end{enumerate}
\end{corollary}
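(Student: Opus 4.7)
The plan is to transport the question through the covering $F\colon \G_m^d\to X_{k,t}^P$ (with $d=3g+n-3$) given by Proposition~\ref{perfprop}. Since $F$ is surjective on $\C$-points with finite fibers and intertwines the coordinatewise $\Gamma_z$-action on the source with the $\Gamma_P$-action on the target, finiteness of $|\Gamma_P\backslash X_{k,t}^P(A)|$ is equivalent to finiteness of $|\Gamma_z\backslash F^{-1}(X_{k,t}^P(A))|$. By Fact~\ref{fact}, finitely many trace functions $\tr_{a_1},\dots,\tr_{a_N}$ on essential curves $a_j\subset\Sigma$ generate the coordinate ring of $X(\Sigma)$, so the Laurent polynomials $L_j:=F^*\tr_{a_j}\in\C[v_1^{\pm 1},\dots,v_d^{\pm 1}]$ separate points of $\G_m^d(\C)$ modulo the finite fibers of $F$.

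For part~(2), the hypothesis that no $t_i$ lies in $[-2,2]$ is equivalent to $|z_i|\neq 1$ for every $i$, so $D=\prod_i\{v_i:\min(1,|z_i|)\leq|v_i|\leq\max(1,|z_i|)\}$ is a compact fundamental domain for $\Gamma_z$ in $\G_m^d(\C)$. Each $L_j$ is continuous and hence bounded on $D$; for $v\in D\cap F^{-1}(X_{k,t}^P(A))$, the tuple $(L_1(v),\dots,L_N(v))\in A^N$ lies in a bounded subset of $\C^N$, which by discreteness of $A$ is finite. The separation property above then pins $v$ down to a finite set of points, giving part~(2).

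For part~(1) we work with $A\subset\R$. We first eliminate the case in which some $z_i$ is elliptic of infinite order (that is, $|z_i|=1$ with $z_i$ not a root of unity). In this case, $F^{-1}(X_{k,t}^P(A))$ is closed in $\G_m^d(\C)$ since $A$ is closed and each $L_j$ continuous, so it contains the Euclidean closure of every $\Gamma_z$-orbit it meets. For any $v$ in this set, the closure contains the full circle $\{u:|u|=|v_i|\}$ in the $i$-th coordinate; on this circle each $L_j$ is continuous with values in the discrete set $A$ and hence constant. A Laurent polynomial in $v_i$ constant on a circle $|v_i|=r$ is necessarily a global constant in $v_i$, and applying this to every essential curve contradicts the separation property combined with $F$ having finite fibers onto a $d$-dimensional target. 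Hence $F^{-1}(X_{k,t}^P(A))=\emptyset$ in this case.

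It remains to treat the case in which every $z_i$ is hyperbolic or a root of unity. Since $X_{k,t}^P(A)\subseteq X_{k,t}^P(\R)$, nonemptiness forces $t\in\R^d$, and perfectness gives $|t_i|\neq 2$. Under the $F$-parametrization, the real locus of $X_{k,t}^P$ pulls back to a finite union of $\Gamma_z$-stable real subsets of $\G_m^d(\C)$, each of the form $\prod_i R_i$ with $R_i=\R^\times$ for indices with $|t_i|>2$ and $R_i=S^1$ for indices with $|t_i|<2$, one branch per real form of $X_{k,t}^P$ and choice of real base point $\rho_0$. On each such branch $\Gamma_z$ acts with a compact fundamental domain --- bounded intervals in $\R^\times$ for hyperbolic coordinates and finite arcs in $S^1$ for torsion coordinates --- so the compactness-plus-discreteness argument of part~(2) applies verbatim to give finitely many $\Gamma_z$-orbits on each, and summing over the finitely many branches yields part~(1). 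The principal delicate step is the identification of $F^{-1}(X_{k,t}^P(\R))$ with a finite union of such real branches, which is where the reality hypothesis $A\subset\R$ (not merely closed discrete in $\C$) is essential, and which rests on the classification of real forms of $\SL_2(\C)$ together with the finite-fiber structure of $F$.
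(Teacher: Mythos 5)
There is a genuine gap, and it sits at the load-bearing point of your argument. You repeatedly invoke the claim that finitely many trace functions of \emph{essential curves} generate the coordinate ring of $X(\Sigma)$ (hence separate points modulo the finite fibers of $F$), and you attribute this to Fact~\ref{fact}. Fact~\ref{fact} says nothing of the sort: its generators are $\tr_{a_{i_1}\cdots a_{i_k}}$ for words of length at most three in a set of generators of $\pi_1\Sigma$, and such words are in general not represented by simple closed curves. Generation (or even separation of points up to finite ambiguity) by simple-curve traces is a genuinely nontrivial statement, essentially a skein-theoretic theorem, and it is nowhere available in the paper. This misattribution infects three steps: the passage in part~(2) from ``finitely many admissible trace tuples'' to ``finitely many points''; the contradiction ending your elliptic-of-infinite-order case; and, most seriously, the inclusion $X_{k,t}^P(A)\subseteq X_{k,t}^P(\R)$ in part~(1). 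By definition $X_{k,t}^P(A)$ only constrains traces along essential simple closed curves, so realness of the point of the variety does not follow from anything you have established; invoking the classification of real forms of $\SL_2(\C)$ presupposes that \emph{all} traces are real, which is exactly what is missing. You flag this as ``the principal delicate step'' but then leave both it and the identification of $F^{-1}(X_{k,t}^P(\R))$ with a finite union of $\Gamma_z$-stable products of copies of $\R^\times$ and $S^1$ unproved, so part~(1) is not established.

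For comparison, the paper's proof of part~(1) sidesteps all of this: it composes with the natural $\Gamma_P$-equivariant morphism $X_{k,t}^P(\Sigma)\to\prod_i X_{k_i,t_i}^{a_i}(\Sigma_i)$, where $\Sigma_i$ is the type $(1,1)$ or $(0,4)$ subsurface obtained by regluing the two boundary curves of $\Sigma|P$ corresponding to $a_i$; this morphism has finite fibers because $F$ does. In each factor the two non-pants coordinates are themselves traces of essential curves of $\Sigma$ (they intersect $a_i$, hence are essential), so $A\subset\R$ forces them onto a real conic: an ellipse, hence compact, when $t_i\in(-2,2)$, and a case handled by the part~(2) compact-fundamental-domain argument when $|t_i|>2$. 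In particular the irrational-rotation case you treat separately requires no special attention, and no statement about simple traces generating the coordinate ring is needed. If you wish to keep your more intrinsic route, you must either import the skein-theoretic generation result explicitly, or replace the appeal to Fact~\ref{fact} by a direct argument with explicitly chosen essential curves crossing each $a_i$, whose pullbacks under $F$ are Laurent polynomials in the gluing parameters $v_i$ that determine each $v_i$ up to finite ambiguity, with perfectness used to rule out degenerate coefficients; and you would still owe a proof of the real-branch description of $F^{-1}(X_{k,t}^P(\R))$ before the inclusion $X_{k,t}^P(A)\subseteq X_{k,t}^P(\R)$ can be used.
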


\begin{proof}
Let us write $t=(t_1,\cdots,t_{3g+n-3})$, and let us first suppose $t_i\notin[-2,2]$ for each $i=1,\cdots,3g+n-3$. This shows in particular that each $z_i$ (defined above so that $z_i+z_i^{-1}=t_i$) has absolute value different from $1$. In particular, every point in $\G_m^{3g+n-3}(\C)$ is $\Gamma_z$-equivalent to a point in a region $K\subset\G_m^{3g+n-3}(\C)$ which is compact with respect to the Euclidean topology. Under the map
$$F:\G_m^{3g+n-3}(\C)\to X_{k,t}^P(\C)$$
constructed in Proposition \ref{perfprop}, the image of $K$ in $X_{k,t}^P(\C)$ is compact and hence has finite intersection with any closed discrete $A\subset\C$. The equivariance property of $F$ proved in Proposition \ref{perfprop} then implies our claim, when $t_i\notin[-2,2]$ for each $i=1,\cdots,3g+n-3$.

Let us now assume that $A$ is a closed discrete subset of $\R$. Note in particular that the coordinates $t_i$ may be assumed to lie in $A$ and hence are real, since otherwise $X_{k,t}^P(A)$ is empty. Now, let us consider the natural morphism
$$X_{k,t}^P(\Sigma)\to\prod_{i=1}^{3g+n-3}X_{k_i,t_i}^{a_i}(\Sigma_i)$$
where each $\Sigma_i$ is the surface of type $(0,4)$ or $(1,1)$ obtained by gluing together the two boundary curves on $\Sigma|P$ corresponding to $a_i$, and the boundary traces $k_i$ are appropriately determined from $k$, $P$, $t$, and $\Sigma_i$. The fact that the map $F$ constructed in Proposition \ref{perfprop} has finite fibers implies that the above morphism also has finite fibers (at the level of complex points). The claim to be proved thus reduces to the case where $\Sigma$ is of type $(0,4)$ or $(1,1)$. But this is obvious by elementary geometric considerations (see also the proof of Theorem 1.4 in \cite{whang2}).
\end{proof}

\begin{proposition}
\label{integrable}
Let $C$ be a geometrically irreducible algebraic curve over $\Z$ lying in a perfect fiber $X_{k,t}^P$. Then $C(\Z)$ can be effectively determined, and
\begin{enumerate}
	\item $C(\Z)$ is finite, or
	\item $C(\Z)$ is finitely generated under some nontrivial $\gamma\in\Gamma_P$ preserving $C$.
\end{enumerate}
If moreover $C$ is not fixed pointwise by any nontrivial $\gamma\in\Gamma_P$, the same result holds with $C(\Z)$ replaced by the set of all imaginary quadratic integral points on $C$.
\end{proposition}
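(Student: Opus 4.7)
The plan is to pull the curve $C$ back via the finite parametrization $F\colon\G_m^{3g+n-3}\to X_{k,t}^P$ of Proposition \ref{perfprop} and apply the Baker-theoretic Proposition \ref{baker}. Set $\tilde C:=F^{-1}(C)$, a finite union of irreducible algebraic curves over $\overline\Q$. By $F$-equivariance, $\Gamma_P$-orbits in $C$ correspond, up to the finite fibers of $F$, to $\Gamma_z$-orbits in $\tilde C$.

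For $C(\Z)$, Corollary \ref{perfectcor}(1) decomposes $X_{k,t}^P(\Z)$ into finitely many $\Gamma_P$-orbits $\Gamma_P\cdot\rho_1,\dots,\Gamma_P\cdot\rho_N$; choosing lifts $\tilde\rho_i\in F^{-1}(\rho_i)$, we have $C(\Z)\subseteq\bigcup_i C\cap\Gamma_P\cdot\rho_i$, corresponding under $F$ to $\bigcup_i\tilde C\cap\Gamma_z\cdot\tilde\rho_i$. Since $t_j\in\Z\setminus\{\pm2\}$, each $z_j$ with $|t_j|>2$ is a real algebraic number $\neq\pm1$ while each $z_j$ with $t_j\in\{-1,0,1\}$ is a root of unity; passing to the free part $\Gamma_z^{\mathrm{free}}\subset\Gamma_z$ (absorbing the finite torsion subgroup into the ambiguity of $F$), Proposition \ref{baker} applies to each $\overline\Q$-component of $\tilde C$. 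Its dichotomy yields either (i) effective finiteness of each $\tilde C\cap\Gamma_z^{\mathrm{free}}\cdot\tilde\rho_i$, whence $C(\Z)$ is finite (case~(1)), or (ii) some component of $\tilde C$ is a translate of an algebraic subtorus invariant under a nontrivial $z\in\Gamma_z^{\mathrm{free}}$. In case~(ii), $F$-equivariance forces the corresponding $\gamma\in\Gamma_P$ to preserve $C$, and $\tilde C\cap\Gamma_z\cdot\tilde\rho_i$ is a finite union of cosets of $\langle z\rangle$, so $C\cap\Gamma_P\cdot\rho_i$ is a finite union of $\langle\gamma\rangle$-orbits (case~(2)); effectivity propagates from both inputs.

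For the imaginary quadratic case under the extra hypothesis, I would run the same argument after establishing a substitute for Corollary \ref{perfectcor}(1) tailored to $A:=\bigcup_{d>0}O_d$. Although $A$ is dense in $\C$ and not closed discrete, its truncation $A\cap\{|\alpha|\leq M\}$ is finite for each $M>0$: any non-rational $\alpha\in O_d$ has $|\mathrm{Im}\,\alpha|\geq\tfrac12\sqrt d$, forcing $d\leq 4M^2$. Combined with the compactness of a fundamental domain for $\Gamma_z^{\mathrm{free}}$ acting on the subtorus of $\G_m^{3g+n-3}(\C)$ corresponding to the coordinates with $|t_j|>2$ (where $|z_j|>1$), this yields the decomposition of $X_{k,t}^P(A)$ into finitely many $\Gamma_P$-orbits needed. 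The Baker argument then proceeds as before, with the hypothesis that $C$ is not pointwise fixed by any nontrivial $\gamma\in\Gamma_P$ excluding the degenerate scenario where the invariance element of Baker's alternative~(ii) would descend to a $\gamma$ fixing $C$ pointwise (forcing the resulting $\gamma$ in case~(2) to act on $C$ with infinite order). The main obstacle is precisely this imaginary quadratic step when some $t_j\in\{-1,0,1\}$: in those coordinates the $\Gamma_z^{\mathrm{free}}$-fundamental domain fails to be compact, so one must project $\tilde C$ onto the subtorus indexed by the $|t_j|>2$ coordinates and verify that only finitely many preimages in $\tilde C$ can yield imaginary quadratic values for every essential trace function, using that the fixed integer constants along $P$ and the roots of unity $z_j$ constrain the trace polynomials along the torsion directions cleanly.
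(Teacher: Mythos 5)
Your treatment of the main case (every $|t_i|\geq 3$) is essentially the paper's argument: reduce to finitely many $\Gamma_P$-orbits via Corollary \ref{perfectcor}, pull back along the map $F$ of Proposition \ref{perfprop}, and apply Proposition \ref{baker} to $F^{-1}(C)$ and a lift of an orbit representative; the bookkeeping about finitely many $\Gamma_z$-orbits and the descent of the Baker dichotomy to cases (1) and (2) is fine at the level of detail the paper itself uses. The gap is in the coordinates with $t_i\in\{0,\pm1\}$. There $z_i$ is a non-real root of unity (of order $3$, $4$ or $6$), so the group $\Gamma_z$ does not satisfy the hypotheses of Proposition \ref{baker}, which requires every $z_i$ to be real and $\neq\pm1$; and ``absorbing the finite torsion subgroup into the ambiguity of $F$'' is not an argument --- the torsion translations are not fiber ambiguities of $F$. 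What is needed (and not supplied) is, after splitting each $\Gamma_z$-orbit into finitely many $\Gamma_z^{\mathrm{free}}$-orbits, a slice argument: each such orbit lies in a slice where the torsion coordinates are frozen, and either $F^{-1}(C)$ meets that slice in finitely many points, or it lies in the slice and one applies Proposition \ref{baker} to its projection onto the subtorus of non-torsion coordinates. The paper avoids this entirely by a different, more geometric route: it maps $C$ to $X_{k_i,t_i}^{a_i}(\Sigma_i)$ for the glued-up surface $\Sigma_i$ of type $(1,1)$ or $(0,4)$, observes that the real points there form an ellipse (compact, so finitely many integral points), and inducts on cutting along $a_i$ when that map is constant.

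For the imaginary quadratic statement you leave your self-declared ``main obstacle'' (torsion coordinates) unresolved, but it is in fact vacuous, and you misread the role of the hypothesis. If some $t_i\in\{0,\pm1\}$, then $z_i$ is a root of unity of order $N\in\{3,4,6\}$, and by the equivariance in Proposition \ref{perfprop} the element $\tau_{a_i}^N$, which is nontrivial in the free abelian group $\Gamma_P$, acts trivially on all of $X_{k,t}^P$ and hence fixes $C$ pointwise. So the hypothesis that $C$ is not fixed pointwise by any nontrivial $\gamma\in\Gamma_P$ forces every $|t_i|\geq3$, and the imaginary quadratic claim reduces to your first case with $A=\bigcup_{d>0}O_d$, which is indeed closed and discrete in $\C$ (your truncation bound $|\mathrm{Im}\,\alpha|\geq\tfrac12\sqrt d$ is exactly the right observation, and is how the paper invokes Corollary \ref{perfectcor}); the hypothesis is not there to exclude a ``degenerate Baker alternative.'' Without this observation your proof of the imaginary quadratic part does not close, and the $C(\Z)$ part in the torsion case needs the missing slice-and-project (or ellipse-plus-induction) argument spelled out.
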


\begin{proof}
Note that $t\in X(P,\Z)\simeq\Z^{3g+n-3}$. We shall first consider the case where no coordinate of $t$ lies in $[-2,2]$. By Corollary \ref{perfectcor}, the set $\bigcup_{d>0}X_{k,t}^P(O_d)$ consists of finitely many $\Gamma_P$-orbits. Thus, it suffices to consider the intersection of $C(\C)$ with the orbit of a single point $\rho\in\bigcup_{d>0} X_{k,t}^P(O_d)$.

Let $F:\G_m^{3g+n-3}\to X_{k,t}^P$ be a $\Gamma_P$-equivariant morphism as constructed in the proof of Proposition \ref{perfprop}. Let $z=(z_1,\cdots,z_{3g+n-3})\in\G_m^{3g+n-3}$ be as defined earlier in this subsection. By our hypothesis on $t$, each $z_i$ is a real quadratic integer. Let $C'=F^{-1}(C)$, and choose a point $p\in F^{-1}(\rho)$. Applying Proposition \ref{baker} to the curve $C'\subset\G_m^{3g+n-3}$ and projecting the result down to $X_{k,t}^P$, we obtain the result.

Let us write $P=a_1\sqcup\cdots\sqcup a_{3g+n-3}$, and let us denote by $t_i\in\Z$ the component of $t$ corresponding to $a_i$. Based on the above argument, it remains to consider the case where $t_i\in[-2,2]$ for some $i$. Since we must have $t_i\in\{0,\pm1\}$, we see that the Dehn twist $\tau_{a_i}$ acts on the fiber $X_{k,t}^P$ with finite order (as seen from the discussion in Section \ref{sect:2.3}), so we need only to consider $C(\Z)$. Let $\Sigma_i$ be the surface of type $(0,4)$ or $(1,1)$ obtained by gluing together the two boundary curves on $\Sigma|P$ corresponding to $a_i$, and consider the composition of morphisms
$$C\to X_{k,t}^P(\Sigma)\to X_{k_i,t_i}^{a_i}(\Sigma_i)$$
where $k_i\in X(\del\Sigma_i,\C)$ is appropriately determined from $k$, $t$, and $\Sigma_i$. Note that the set of real points of $X_{k_i,t_i}^{a_i}(\Sigma_i)$ defines a ellipse in an appropriate coordinate plane, as seen from our discussion in Section \ref{sect:2.2.3}. In particular, $X_{k_i,t_i}^{a_i}(\Sigma_i,\Z)$ is finite, and if the above composition is nonconstant then we find that $C(\Z)$ is finite, as desired. It thus remains to consider the case where the above composition is constant. This implies that the morphism
$$C\to X_{k,t}^P(\Sigma)\to X_{k',t'}^{P'}(\Sigma|a_i)$$
(where $k'$, $P'$, and $t'$ are appropriately determined from $k$, $P$, $t$, and $a_i$)
must be nonconstant, as seen from the consideration of the morphism $F:\G_m^{3g+n-3}\to X_{k,t}^P$ as constructed in the proof of Proposition \ref{perfprop}. Thus, we may apply induction and a straightforward modification of the previous paragraph to conclude the result.
\end{proof}

\subsection{Proofs of Theorem \ref{theorem4} and Corollary \ref{corollary5}} \label{sect:4.4}
We obtain Theorem \ref{theorem4} by combining Theorem \ref{theorem3} for nonintegrable curves with Proposition \ref{integrable} for integrable curves in $X_k$. Finally, Corollary \ref{corollary5} follows easily from Theorem \ref{theorem4} (and indeed from Theorem \ref{theorem3}) and our understanding of the fibers $X_{k,t}^P$ from Section \ref{sect:3.1}.

\end{document}